\newcommand{\Gr}[2]{\psfig{file=#1.pdf,width=#2}}
\definecolor{darkgreen}{rgb}{0,0.5,0}
\newcommand{\Z}{{\mathbb Z}}
\newcommand{\Q}{{\mathbb Q}}
\renewcommand{\P}{{\mathbb P}}
\newcommand{\C}{{\mathbb C}}
\newcommand{\BP}{{\mathbb P}}
\newcommand{\BA}{{\mathbb A}}
\newcommand{\calE}{{\mathcal E}}
\newcommand{\calX}{{\mathcal X}}
\newcommand{\To}{\longrightarrow}
\newcommand{\inj}{\hookrightarrow}
\newcommand{\eps}{\varepsilon}
\renewcommand{\div}{\operatorname{div}}
\newcommand{\id}{\operatorname{id}}
\newcommand{\Aut}{\operatorname{Aut}}
\newtheorem{Theorem}{Theorem}[section]
\newtheorem{Lemma}[Theorem]{Lemma}
\newtheorem{Proposition}[Theorem]{Proposition}
\newtheorem{Corollary}[Theorem]{Corollary}
\newtheorem{Conjecture}[Theorem]{Conjecture}
\theoremstyle{definition}
\newtheorem{Definition}[Theorem]{Definition}
\newtheorem{Remark}[Theorem]{Remark}
\numberwithin{equation}{section}
\newcommand{\Abelian}{abelian}
\begin{document}

\title{Geometry and arithmetic of primary Burniat surfaces}

\author{Ingrid Bauer}
\address{Mathematisches Institut,
         Universit\"at Bayreuth,
         95440 Bayreuth, Germany.}
\email{Ingrid.Bauer@uni-bayreuth.de}

\author{Michael Stoll}
\address{Mathematisches Institut,
         Universit\"at Bayreuth,
         95440 Bayreuth, Germany.}
\email{Michael.Stoll@uni-bayreuth.de}
\date{April 12, 2016}
\thanks{This work was done in the framework of the DFG Research Unit 790
        \emph{Classification of algebraic surfaces and compact complex manifolds}}

\begin{abstract}
  We study the geometry and arithmetic of so-called \emph{primary Burniat surfaces},
  a family of surfaces of general type arising as smooth bidouble covers
  of a del Pezzo surface of degree~6 and at the same time as \'etale
  quotients of certain hypersurfaces in a product of three elliptic curves.
  We give a new explicit description of their moduli space and determine
  their possible automorphism groups. We also give an explicit description
  of the set of curves of geometric genus~1 on each primary Burniat surface.
  We then describe how one can try to obtain a description of the set
  of rational points on a given primary Burniat surface~$S$ defined over~$\Q$.
  This involves an explicit description of the relevant twists of the
  \'etale covering of~$S$ coming from the second construction mentioned
  above and methods for finding the set of rational points on a given twist.
\end{abstract}

\keywords{Surface of general type, moduli space, Lang conjectures, rational points}
\subjclass[MSC 2010]{Primary: 14J29, 14G05; Secondary: 14G25, 14J10, 14J50, 14K12}

\maketitle


\section{Introduction}

Whereas it is well-known since Faltings' proof~\cite{Faltings1983} of the
Mordell Conjecture that a curve of general type (and so of genus $\ge 2$)
over~$\Q$ can have only finitely many rational points, not much is known
in general for rational points on \emph{surfaces} of general type.
The Bombieri-Lang Conjecture predicts that the set~$S(\Q)$ of rational
points on a surface~$S$ of general type is not Zariski dense; this
is analogous to Faltings' theorem. This conjecture is open in general,
but when $S$ has a finite \'etale covering~$X$ that is a subvariety
of an abelian variety, then its truth follows from a generalization
of Faltings' result and descent theory, which we recall in
Section~\ref{S:Lang}. We can then
ask whether it is possible to obtain an explicit description of~$S(\Q)$.
The purpose of this paper is to initiate an investigation of this question
by considering a special class of surfaces as a case study. The class
of surfaces we look at are the so-called \emph{primary Burniat surfaces}.

Primary Burniat surfaces
are certain surfaces of general type first constructed by
Burniat~\cite{burniat} in the 1960s as smooth bidouble covers of a del Pezzo
surface of degree~$6$.
Their moduli space $M$ forms a~$4$-dimensional irreducible
connected component  of the moduli space of surfaces of general type as shown
by Mendes Lopes and Pardini ~\cite{mp} using an argument related to the degree of the
bicanonical map. This result was reproved by Bauer and Catanese
in~\cite{burniat1}, where the authors also show the rationality of~$M$.
Their proof makes use of the fact (also proved
in the same paper) that Burniat's surfaces can also be obtained as \'etale
quotients of certain hypersurfaces in products of three elliptic curves,
a construction first considered by Inoue~\cite{inoue}. It is this description
that allows us to show that Lang's Conjectures on rational points on
varieties of general type hold for primary Burniat surfaces.
In fact, we recall a more general statement for projective varieties admitting
an \'etale covering that embeds in a product of \Abelian\ varieties and
curves of genus at least two. This applies in particular for certain so-called
Inoue type varieties; see Section~\ref{S:Lang}.

After recalling the two different constructions of primary Burniat surfaces
in Section~\ref{S:Burniat},
we produce an explicit description of their moduli space as a subset
of~$\BA^6$ in Section~\ref{S:moduli}.
We then use this description to classify the possible automorphism
groups of primary Burniat surfaces. Generically, this automorphism group
is $(\Z/2\Z)^2$, but its order can get as large as~$96$; see Section~\ref{S:auto}.

By the general theorem given in Section~\ref{S:Lang}, we know that the
set of rational points on a primary Burniat surface~$S$ consists of the points
lying on the finitely many curves of geometric genus~$\le 1$ on~$S$, together
with finitely many `sporadic' points. To describe this set~$S(\Q)$ explicitly,
we therefore need to determine the set of low-genus curves on~$S$.
This is done in Section~\ref{S:low}. The result is that $S$ does not contain
rational curves, but does always contain six smooth curves of genus~$1$, and may
contain up to six further (singular) curves of geometric genus~$1$.

In the last part of the paper, we discuss how the set $S(\Q)$ can be determined
explicitly for a primary Burniat surface~$S$ defined over~$\Q$.
We restrict to the case that the three genus~$1$ curves whose product contains
the \'etale cover~$X$ of~$S$ are individually defined over~$\Q$. We can then
write down equations for~$X$ of a somewhat more general (but similar) form
than those used in Section~\ref{S:moduli} where we were working over~$\C$.
We explain how to determine the finitely many twists of the covering $X \to S$
that may give rise to rational points on~$S$, see Section~\ref{S:explicit},
and we discuss what practical methods are available for the determination
of the sets of rational points on these twists, see Section~\ref{S:find}.
We conclude with a number of examples.


\section{Rational points on varieties of general type} \label{S:Lang}

We start giving a short account on some conjectures, which were formulated
by S.~Lang\footnote{Sometimes these conjectures are referred
to as `Bombieri-Lang Conjectures', but Enrico Bombieri insists that he only
ever made these conjectures for surfaces.}~\cite{langhyp}
after discussions with an input from several other mathematicians.

\begin{Conjecture}[Weak Lang Conjecture]\label{Conj:SBomLa1}
  If $X$ is a smooth projective variety of general type defined over a number field~$K$,
  then the set~$X(K)$ of $K$-rational points on~$X$ is not Zariski-dense in~$X$.
\end{Conjecture}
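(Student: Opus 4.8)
Because the Weak Lang Conjecture is, as stated, a theorem only in special cases and remains open in the generality quantified above, the honest plan is to reduce the general statement to the one situation in which unconditional results are available, namely when $X$ admits a finite \'etale covering that embeds into a semiabelian variety. In that setting the conclusion follows from Faltings' theorem on subvarieties of abelian varieties together with its extension by Vojta to the semiabelian case: the Zariski closure of the rational points is a finite union of translates of subgroups, and the general-type hypothesis forces each such translate to be a proper subvariety. The approach therefore begins by trying to manufacture such a covering, or at least a nonconstant map to an \Abelian\ variety carrying arithmetic information. The most natural canonical candidate is the Albanese morphism $a\colon X \To \operatorname{Alb}(X)$, defined over~$K$.

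The first step is to analyze the image $a(X)$. If $a$ is generically finite onto its image, one would like to invoke the Faltings--Vojta results on $\operatorname{Alb}(X)$ to conclude that $a(X)(K)$ is not Zariski-dense, and then pull back the bounding subvariety along~$a$. A variant, which is the mechanism behind the Inoue-type varieties treated later in this paper, replaces the Albanese by a map to an explicit product of \Abelian\ varieties and high-genus curves, to which the combination of Faltings' theorem and descent applies directly. One would then attempt to argue that every smooth projective variety of general type can, after passing to a finite \'etale cover, be fitted into such a picture.

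The fundamental obstruction, and the reason the conjecture is open in general, arises precisely here: a smooth projective variety of general type may have trivial or small Albanese. Already among surfaces of general type one finds examples with irregularity $q=0$, for which $\operatorname{Alb}(X)$ is a point and the morphism~$a$ is constant, so that no arithmetic whatsoever can be extracted from it; passing to \'etale covers need not improve the situation, since the relevant fundamental groups can be too small to produce nonconstant maps to \Abelian\ varieties or to high-genus curves. In such cases there is at present no known geometric construction that produces the proper closed subvariety predicted to contain all but finitely many rational points.

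The only general framework that would settle the conjecture is the system of height inequalities conjectured by Vojta, which predict a bound of the shape $h_K(P) \le (1+\eps)\,d(P) + O(1)$ holding outside a proper closed subset for points of bounded degree on a variety of general type. Vojta has proved these inequalities in the semiabelian setting, which again recovers exactly the reducible case above, but the general inequality is itself wide open and is known to imply deep statements such as the $abc$~conjecture. Thus the main obstacle is not an isolated technical lemma but the complete absence of any unconditional tool that converts positivity of the canonical bundle into a Diophantine height inequality when there is no nontrivial map to an \Abelian\ variety. Barring such a tool, the best one can do is to verify the conjecture case by case for varieties whose \'etale covers embed into products of \Abelian\ varieties and curves of genus at least two, exactly as is carried out for primary Burniat surfaces in the remainder of this paper.
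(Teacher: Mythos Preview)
The statement you were asked to address is stated in the paper as a \emph{conjecture}, not a theorem; the paper explicitly remarks that ``In general these conjectures are wide open'' and does not attempt a proof. There is therefore no proof in the paper to compare your proposal against. Your write-up correctly recognizes this status, accurately identifies the special cases where the conclusion is known (subvarieties of abelian varieties via Faltings, and more generally varieties admitting an \'etale cover embedding into a product of abelian varieties and higher-genus curves, which is precisely Theorem~\ref{T:general} of the paper), and correctly names the essential obstruction (varieties of general type with $q=0$ and no useful \'etale covers). That is the appropriate response here.
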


For $X$ as in this conjecture, one defines the \emph{exceptional set}~$N$
of~$X$ as the union of all the images of non-constant morphisms $f \colon A \to X$
from an \Abelian\ variety~$A$ to~$X$, defined over~$\bar{K}$.

\begin{Conjecture}[Strong Lang Conjecture]\label{Conj:SBomLa2}
  Let $X$ be a smooth projective variety of general type defined over a number field~$K$.
  The exceptional set~$N$ of~$X$ is a proper closed subvariety of~$X$,
  and for each finite field extension $K \subset L$, the set $X(L) \setminus N(L)$
  is finite.
\end{Conjecture}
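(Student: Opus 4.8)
The plan must begin with an honest caveat: as stated, for an \emph{arbitrary} smooth projective variety of general type, the Strong Lang Conjecture is wide open, and no unconditional proof is known. The only systematic technique that establishes it in any generality comes from Faltings' theorem (as extended by Vojta) on rational points on subvarieties of (semi-)\Abelian\ varieties: if $Y \subset A$ is a closed subvariety of an abelian variety over a number field~$K$, then $Y(K)$ is contained in a finite union of translates of abelian subvarieties of~$A$ contained in~$Y$; in particular the points of~$Y$ lying outside its exceptional set are finite. So the whole game is to produce a morphism from~$X$ to an abelian variety strong enough to transport this finiteness statement back to~$X$.

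The natural candidate is the Albanese morphism $\alpha \colon X \To A = \mathrm{Alb}(X)$. First I would push $X(K)$ forward to $\alpha(X) \subset A$ and invoke Faltings--Vojta there, and then induct on dimension along the Albanese fibration: one stratifies $\alpha(X)$ by the locus where the fibre dimension of~$\alpha$ jumps, and on each stratum tries to show either that the fibres are themselves of general type (so that the inductive hypothesis applies) or that the base contributes only the expected translates of abelian subvarieties to the exceptional set~$N$. Assembling these contributions into a single proper closed~$N$, and checking that $X(L) \setminus N(L)$ is finite for every finite extension $K \subset L$ simultaneously, is delicate bookkeeping, but it is exactly the kind of statement the Faltings--Vojta output is designed to support.

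The hard part---indeed the point at which the whole scheme collapses in general---is that this strategy has no traction when the Albanese map carries too little information. If $X$ has irregularity $q = \dim \mathrm{Alb}(X)$ much smaller than $\dim X$, and in the extreme case $q = 0$ so that $\mathrm{Alb}(X) = 0$ and $\alpha$ is constant, then there is simply no nonconstant morphism from~$X$ to an abelian variety to exploit, and no currently available method produces the required finiteness. This is precisely the situation for the surfaces studied here: a primary Burniat surface has $p_g = q = 0$, so its own Albanese is trivial and the direct approach says nothing. That is exactly why the rest of the paper does not attack the conjecture head-on, but instead passes to the \'etale cover $X \to S$, which \emph{does} embed in a product of elliptic curves; there the Faltings--Vojta input applies genuinely, and \'etale descent returns the conclusion for~$S$. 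In short, I would not expect to prove the conjecture as stated; I would expect to prove it only for those varieties of general type admitting a finite \'etale cover that maps nontrivially into a product of \Abelian\ varieties, which is the special case the paper actually establishes.
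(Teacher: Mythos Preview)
Your assessment is correct, and it matches the paper exactly: the statement in question is a \emph{conjecture}, not a theorem, and the paper offers no proof of it in general. Immediately after stating it, the paper writes ``In general these conjectures are wide open,'' and the only case it actually establishes is the special one you identify---varieties admitting a finite \'etale cover that embeds in a product of abelian varieties and curves of genus $\ge 2$ (Theorem~\ref{T:general}), proved precisely via Faltings' theorem, Kawamata's result, and Chevalley--Weil descent. Your discussion of why the Albanese approach fails for $q=0$ surfaces and why the paper passes to the \'etale cover~$X$ is exactly the motivation the paper gives for its strategy on primary Burniat surfaces.
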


In general these conjectures are wide open. They hold
for curves (by Faltings' proof of the Mordell Conjecture), and more generally
for subvarieties of \Abelian\ varieties, see~\cite{Faltings1994}.
However, even in
these cases it is far from clear that a suitable explicit description of~$X(K)$
can actually
be determined. In the case of curves, there has been considerable progress
in recent years (see for example~\cite{StollFiniteDescent}),
indicating that an effective (algorithmic) solution might be possible.

It is therefore natural
to consider the case of surfaces next, where so far only very little is known.

Let~$S$ be a smooth projective surface of general type over a number field~$K$.
In this case, Conjectures \ref{Conj:SBomLa1} and~\ref{Conj:SBomLa2} state
that there are only finitely
many curves of geometric genus 0 or~1 on~$S$
(possibly defined over larger fields)
and that the set of $K$-rational points on~$S$ outside these curves is finite.
We call the curves of geometric genus at most~$1$ on~$S$ the \emph{low-genus curves}
on~$S$, and we call the $K$-rational points outside the low-genus curves
\emph{sporadic $K$-rational points} on~$S$.

An explicit description of $S(K)$ would then consist of the finite list
of the low-genus curves on~$S$, together with the finite list
of sporadic $K$-rational points.
Given a class of surfaces of general type, it is therefore natural to proceed in the following way:
\begin{enumerate}[(1)]
  \item \label{I.PFC}
        prove the finiteness of the set of low-genus curves on surfaces in the class;
  \item \label{I.AlgC}
        develop methods for determining this set explicitly;
  \item \label{I.PFP}
        prove the finiteness of the set of sporadic $K$-rational points on surfaces in the class;
  \item \label{I.AlgP}
        develop methods for determining the set of sporadic points explicitly.
\end{enumerate}
Regarding point~\eqref{I.PFC}, the statement was proved by Bogomolov in the
1970s~\cite{bogomolov} under the condition that the following inequality is satisfied:
\[ c_1^2 (S) = K^2_S > c_2 (S), \quad \text{or equivalently,} \quad K^2_S > 6 \chi (S). \]
Under the same assumption, Miyaoka~\cite{miyaoka} gave an upper bound for
the canonical degree $K_S \cdot C$ of a curve~$C$ of geometric genus $\le 1$
on~$S$. A result by S.~Lu~\cite{lu} gives a similar conclusion when
the irregularity~$q(S)$ is at least~$2$.
For another approach that applies for example to diagonal varieties in
arbitrary dimension, see~\cites{BrownawellMasser,Voloch}.

Results of Faltings~\cite{Faltings1994} and Kawamata~\cite{kawamata}*{Theorem~4}
imply that Conjecture~\ref{Conj:SBomLa2} holds for subvarieties
(of general type) of \Abelian\ varieties.
Combining this with a generalization (see for example~\cite{serre}*{\S4.2})
of a result due to Chevalley and Weil~\cite{ChevalleyWeil}, this gives the following
(see also~\cite{HindrySilverman}*{Prop.~F.5.2.5~(ii)} for the `weak' form).

\begin{Theorem} \label{T:general}
  Let $X$ be a smooth projective variety of general type
  over a number field~$K$ which admits a finite \'etale
  covering $\pi \colon \hat{X} \to X$ such that $\hat{X}$ is contained in a product~$Z$
  of \Abelian\ varieties and curves of higher genus (i.e., genus $\geq 2$).
  \begin{enumerate}[\upshape (1)]
    \item $X(K)$ is not Zariski dense in $X$.
    \item If $\hat{X}$ does not contain any translate of a positive dimensional \Abelian\ subvariety
          of the \Abelian\ part of~$Z$, then $X(K)$ is finite.
    \item $X$ satisfies Conjecture~\ref{Conj:SBomLa2}.
  \end{enumerate}
\end{Theorem}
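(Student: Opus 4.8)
The plan is to deduce all three parts from the structural results already assembled, by first reducing everything to the case of a subvariety of an abelian variety and then applying Faltings and Kawamata. Write $Z = A \times C_1 \times \cdots \times C_r$, where $A$ is an abelian variety and the $C_j$ are curves of genus $\geq 2$ defined over~$K$ (after possibly enlarging $K$; by Chevalley--Weil, Theorem~\ref{T:CW}, this only costs a finite extension and does not affect the statements, which are insensitive to the base field for the finiteness claims once we argue carefully — see below). Each $C_j$ embeds in its Jacobian $\Jac(C_j)$ via a choice of $K$-rational base point, so $Z$ embeds in the abelian variety $B \colonequals A \times \Jac(C_1) \times \cdots \times \Jac(C_r)$, and hence $\hat{X} \subset B$ is a closed subvariety of an abelian variety.

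First I would treat $\hat{X}$. Since $X$ is of general type and $\pi$ is étale, $\hat{X}$ is of general type as well (the canonical bundle pulls back and $\pi$ is finite, so Kodaira dimension is preserved). Thus $\hat{X} \subset B$ is a subvariety of general type of an abelian variety. By Kawamata's theorem (Theorem~\ref{T:Kawamata}), the locus $Y \subset \hat{X}$ which is the union of all positive-dimensional translated abelian subvarieties of $B$ contained in $\hat{X}$ is a proper Zariski closed subset of~$\hat{X}$. By Faltings' theorem (Theorem~\ref{T:Faltings}), for any finite extension $L/K$ the set $\hat{X}(L)$ is contained in a finite union of translates $X_i = x_i + B_i$ with $B_i$ an abelian subvariety and $X_i \subset \hat{X}$; the translates with $\dim B_i > 0$ lie inside~$Y$, while those with $\dim B_i = 0$ are single points. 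Hence $\hat{X}(L) \setminus Y(L)$ is finite for every finite $L/K$, and $\hat{X}(L)$ is not Zariski dense in $\hat{X}$ (it lies on $Y$ together with finitely many points, and $Y \subsetneq \hat{X}$). This already gives the analogue of all three conclusions for~$\hat{X}$; note that $Y$ is exactly (contains) the exceptional set of $\hat{X}$ in the sense needed for Conjecture~\ref{Conj:SBomLa2}.

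Now I would push these statements down to~$X$ via $\pi$. For part~(1): by Chevalley--Weil there is a finite $K'/K$ with $X(K) \subset \pi(\hat{X}(K'))$, and since $\hat{X}(K')$ is not Zariski dense in $\hat{X}$ and $\pi$ is finite (hence a closed map sending proper closed subsets to proper closed subsets), $\pi(\hat{X}(K'))$ is not Zariski dense in $X$; a fortiori $X(K)$ is not. For part~(2): the hypothesis is that $\hat{X}$ contains no positive-dimensional translated abelian subvariety of the abelian part of $Z$; I need to observe that this forces $Y = \emptyset$, i.e.\ that $\hat{X}$ contains no positive-dimensional translated abelian subvariety of $B$ at all. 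This is the one point requiring a small argument: a translated abelian subvariety $T \subset \hat{X} \subset A \times \prod \Jac(C_j)$ projects to a translated abelian subvariety in each $\Jac(C_j)$; but $\hat{X}$ meets each curve-factor slice appropriately — more precisely, the image of $\hat{X}$ under the projection to $C_j \subset \Jac(C_j)$ lies on the curve $C_j$, which being of genus $\geq 2$ contains no positive-dimensional translated abelian subvariety, so the projection of $T$ to each $\Jac(C_j)$ is a point, whence $T$ is contained in a fibre over $\prod C_j$, i.e.\ $T$ is a translated abelian subvariety of the abelian part~$A$ — contradicting the hypothesis. Therefore $Y = \emptyset$, and by the paragraph above $\hat{X}(K')$ is finite, so $X(K) \subset \pi(\hat{X}(K'))$ is finite. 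For part~(3): one must exhibit the exceptional set $N$ of $X$ as a proper closed subvariety with $X(L) \setminus N(L)$ finite for all finite $L/K$. Here $N$ is the union of images of non-constant morphisms from abelian varieties to $X$; I would argue that, after pulling back along the étale cover, any such morphism $f \colon G \to X$ lifts (on a suitable isogenous abelian variety, or on a connected component of the fibre product, which is again an abelian variety since finite étale covers of abelian varieties are abelian varieties) to a morphism into $\hat{X}$, whose image therefore lies in~$Y$; hence $N \subset \pi(Y)$, which is a proper closed subvariety of~$X$. Then for finite $L/K$, enlarging to $L'$ via Chevalley--Weil, $X(L) \subset \pi(\hat{X}(L'))$ and the points not on $\pi(Y) \supset N$ come from points of $\hat{X}(L')$ not on $Y$, of which there are finitely many, so $X(L) \setminus N(L)$ is finite.

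The main obstacle is the bookkeeping in part~(2) and part~(3): controlling translated abelian subvarieties and images of abelian varieties under the projections to the curve factors, and checking that the exceptional set of $X$ really is captured by $\pi(Y)$ (this uses that finite étale covers of abelian varieties are again abelian varieties, and that a morphism from an abelian variety to $X$ can be lifted after replacing the source by an isogenous abelian variety). None of the individual steps is deep given Theorems \ref{T:Faltings} and~\ref{T:Kawamata}, but the reduction from the product $Z$ to a genuine abelian variety $B$, and the descent of the exceptional locus through $\pi$, are where care is needed.
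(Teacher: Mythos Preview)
Your approach is essentially the same as the paper's: embed $Z$ into an abelian variety via the Jacobians of the curve factors, note that $\hat{X}$ is again of general type, and then combine Faltings and Kawamata with Chevalley--Weil to descend to~$X$. You are in fact more explicit than the paper in two places where it is terse: the equivalence in part~(2) (the paper just says ``which is equivalent to the assumption in~(2)''), and the inclusion $N \subset \pi(Y)$ in part~(3) via lifting through the \'etale cover.

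There is one small logical slip in your part~(3). Having shown $N \subset \pi(Y)$ and that $X(L) \setminus \pi(Y)(L)$ is finite, you conclude that $X(L) \setminus N(L)$ is finite; but the inclusion goes the wrong way for this ($N \subset \pi(Y)$ gives $X \setminus \pi(Y) \subset X \setminus N$). What you need is the reverse inclusion $\pi(Y) \subset N$, which is immediate: each positive-dimensional translated abelian subvariety $T \subset Y$ gives a non-constant morphism $T \to X$ via~$\pi$, so $\pi(T) \subset N$. Together with your inclusion this yields $N = \pi(Y)$, and then both parts of Conjecture~\ref{Conj:SBomLa2} follow at once. The paper simply asserts $N = \pi(\hat{Y})$ without further comment, so your argument, once this trivial observation is added, is actually the more careful one.
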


\begin{Remark} \label{R:twists}
  For the purpose of actually determining~$X(K)$, we assume in addition that
  the covering~$\pi$ is geometrically Galois, so that $\hat{X}$ is an $X$-torsor
  under a finite $K$-group scheme~$G$ that we also assume to act on~$Z$. ($G$ is allowed
  to have fixed points on~$Z$, but not on~$\hat{X}$.) This allows us to use
  the machinery of descent (see for example~\cite{SkorobogatovBook}).
  There is then a finite collection
  of twists $\pi_\xi \colon \hat{X}_\xi \to X$ of the torsor $\hat{X} \to X$ such that
  $X(K) = \bigcup_\xi \pi_\xi\bigl(\hat{X}_\xi(K)\bigr)$.
  This collection of twists can be taken to be those that are unramified outside
  the set of places of~$K$ consisting of the archimedean places, the places
  dividing the order of~$G$ and the places of bad reduction for~$X$ or~$\pi$.

  Since $G$ also acts on~$Z$ by assumption,
  we have $\hat{X}_\xi \subset Z_\xi$ for the corresponding twists of~$Z$.
  The general form of such a twist is a product of a principal homogenous space
  for some \Abelian\ variety with some curves of higher genus or Weil restrictions of such
  curves over some finite extension of~$K$ (the latter can occur when the action
  of~$G$ on~$Z$ permutes some of the curve factors).
  Write $Z_\xi = Z_\xi^{(1)} \times Z_\xi^{(2)}$, where $Z_\xi^{(1)}$ is the `\Abelian\ part'
  and $Z_\xi^{(2)}$ is the `curve part'.
  By Faltings' theorem, $Z_\xi^{(2)}$ has only finitely many $K$-rational points.
  Fixing such a point~$P$, the fiber~$\hat{X}_{\xi,P}$ above~$P$
  of $\hat{X}_\xi \to Z_\xi^{(2)}$ is a subvariety of the principal homogeneous space~$Z_\xi^{(1)}$.
  So either $\hat{X}_{\xi,P}(K)$ is empty, or else we can consider $Z_\xi^{(1)}$ as an \Abelian\
  variety, so that Faltings' theorem applies to~$\hat{X}_{\xi,P}$.
  This shows that $\hat{X}_{\xi,P}(K)$ is contained in a finite union of translates
  of \Abelian\ subvarieties of~$Z_\xi^{(1)}$ that are contained in~$\hat{X}_{\xi,P}$.
  Since there are only finitely many~$P$, the corresponding statement holds for~$\hat{X}_\xi$
  as well.

  The advantage of this variant is that it allows us to work with $K$-rational points
  on~$\hat{X}$ and its twists, rather than with $K'$-rational points on~$\hat{X}$
  for a (usually rather large) extension~$K'$ of~$K$. So if we have a way
  of determining the translates of \Abelian\ subvarieties of positive dimension
  contained in~$\hat{X}$ and also of finding the finitely many $K$-rational points
  outside these translates for each twist of~$\hat{X}$, we can determine~$X(K)$.
  We will pursue this approach when $X$ is a primary Burniat surface in
  Sections \ref{S:explicit}, \ref{S:find} and~\ref{S:examples} below.
\end{Remark}

Theorem~\ref{T:general} applies to a generalization of Inoue's construction of Burniat's surfaces,
the so-called \emph{classical Inoue type varieties}, which were introduced in~\cite{bacainoue},
and which proved to be rather useful in the study of deformations in the large of surfaces
of general type.
In fact, it turned out that for Inoue type varieties
one can show under certain additional conditions that any smooth variety
homotopically equivalent to an Inoue type variety is in fact
an Inoue type variety. This often allows to determine and describe
the connected components of certain moduli spaces of surfaces of general type, since it implies
that a family yields a closed subset in the moduli space.
For details we refer to the original article~\cite{bacainoue}.
A classical Inoue type variety is of general type, and if it is defined over a number field~$K$,
then it satisfies the assumptions
of Theorem~\ref{T:general} and also Remark~\ref{R:twists}.
Taking $K = \Q$ (to keep the computations feasible),
we obtain a large class of examples of varieties of general type
for which one can try to use the approach of Remark~\ref{R:twists} to determine the
set of rational points explicitly.


\section{Primary Burniat Surfaces} \label{S:Burniat}

The so-called \emph{Burniat surfaces} are several families of surfaces of general type
with $p_g=0$, $K^2 =6,5,4,3,2$, first constructed by P.~Burniat in~\cite{burniat}
as singular \emph{bidouble covers} (i.e., $(\Z/2 \Z)^2$ Galois covers) of the projective
plane~$\P^2$ branched on a certain configuration of nine lines.
Later, M.~Inoue~\cite{inoue} gave another construction of surfaces `closely related to Burniat's surfaces'
with a different technique as $(\Z/2 \Z)^3$-quotients of an invariant hypersurface
of multi-degree~$(2,2,2)$ in a product of three elliptic curves.
In~\cite{burniat1} it is shown that these two constructions give exactly the same surfaces.

In fact, concerning statements about rational points on Burniat surfaces,
Inoue's construction is much more useful than the original construction of Burniat,
as will be clear soon.

We briefly recall Burniat's original construction of Burniat surfaces with $K^2=6$,
which were called \emph{primary} Burniat surfaces in~\cite{burniat1}.

Consider three non-collinear points $p_1, p_2, p_3 \in \P^2$,
which we take to be the coordinate points, and denote
by $Y \colonequals \hat{\P}^2(p_1,p_2,p_3)$ the blow-up of~$\P^2$ in~$p_1,p_2,p_3$.
Then $Y$ is a del~Pezzo surface of degree~$6$
and it is the closure of the graph of the rational map
$$\epsilon \colon \P^2 \dashrightarrow \P^1 \times \P^1 \times \P^1$$
such that
$$\epsilon (y_1 : y_2: y_3) = \bigl((y_2 : y_3), (y_3 : y_1), (y_1 : y_2)\bigr).$$

We denote by $e_i$ the exceptional curve
lying over $p_i$ and by $D_{i,1}$ the unique effective divisor in $|l - e_i- e_{i+1}|$,
i.e., the proper transform of the line $y_{i-1} = 0$, side of the
triangle joining the points $p_i, p_{i+1}$. Here $l$ denotes the total transform of a general line in $\P^2$.

Consider on $Y$ the divisors
$$ D_i = D_{i,1} + D_{i,2} + D_{i,3} + e_{i+2} \in |3l - 3e_i - e_{i+1}+e_{i+2}|,$$
where $D_{i,j} \in |l - e_i|$, for $j = 2,3$, $D_{i,j} \neq D_{i,1}$,
is the proper transform of another line through~$p_i$ and $D_{i,1} \in |l - e_i- e_{i+1}|$
is as above. Assume also that all the corresponding lines in~$\P^2$ are distinct,
so that $D \colonequals \sum_i D_i$ is a reduced divisor.
In this description, the subscripts are understood as residue classes modulo~3.

If we define the divisor $L_i \colonequals 3l - 2 e_{i-1} - e_{i+1}$, then
$$D_{i-1} + D_{i+1} \equiv 6l - 4 e_{i-1} - 2e_{i+1} = 2 L_i,$$
and we can consider the associated bidouble cover $S \rightarrow Y$
branched on~$D$.

From the birational point of view, the function field of $S$ is obtained from the function field of $\mathbb P^2$ by adjoining
$$
\sqrt{\frac{\Delta_1}{\Delta_3}} \ , \ \sqrt{\frac{\Delta_2}{\Delta_3}} \,,
$$
where $\Delta_i \in \mathbb{C} [x_0,x_1,x_2]$ is the cubic polynomial whose zero set has strict transform~$D_i$.

A biregular model is given by the following: let $D_i= \div(\delta_i)$ and let $u_i$ be a fibre coordinate of the geometric line bundle $\mathbb L_i$ whose sheaf of holomorphic sections is $\mathcal O_{Y}(L_i)$. Then $S \subset  \mathbb L_1 \oplus \mathbb L_2 \oplus \mathbb L_3$ is given by the equations:
\begin{align*}
   u_1u_2 = \delta_1 u_3  \ , & \ \ u_1^2 = \delta_3 \delta_1 ,  \\
      u_2u_3 = \delta_2 u_3  \ , & \ \ u_2^2 = \delta_1 \delta_2 , \\
   u_3u_1 = \delta_3 u_2   \ , & \ \ u_3^2 = \delta_2 \delta_3  \,.
\end{align*}
This shows clearly that $S \rightarrow Y$ is a Galois cover with group $(\mathbb Z / 2 \mathbb Z)^2$ and we see the intermediate double covers as the double covers of $Y$ branched on $D_i + D_j$.

Compare Figure~\ref{Figburniat}.

\begin{figure}[htb]
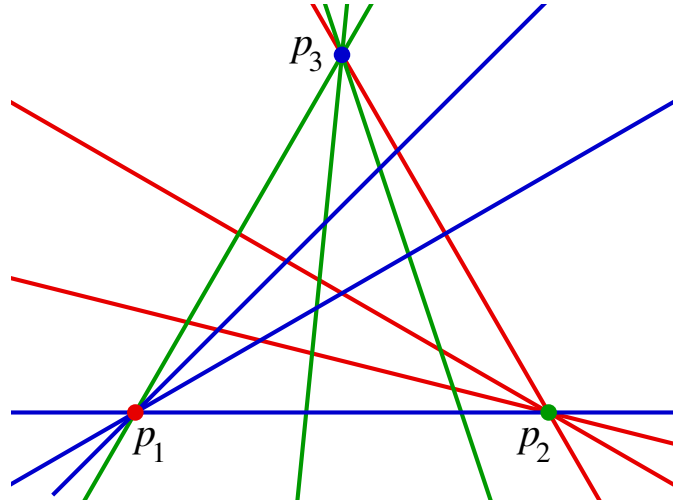

  \hrulefill
  \vspace{5mm}
  \begin{center}
    \Gr{burniat}{0.6\textwidth}
  \end{center}
  \caption{The configuration of the branch locus in Burniat's construction.
           $D_1$ is blue, $D_2$ is red and $D_3$ is green.}
  \label{Figburniat}
  \hrulefill
\end{figure}

Observe that $S$ is nonsingular exactly when the divisor $D$
does not have points of multiplicity~3 (there cannot be points of higher multiplicities).

\begin{Definition}
  A \emph{primary Burniat surface} is a surface~$S$ as constructed above that is non-singular.
\end{Definition}

\begin{Remark}
  A primary Burniat surface $S$ is a minimal surface with $K_S$ ample.
  It has invariants $K_S^2 =6$ and $p_g(S) = q(S) =0$.
\end{Remark}

We now describe Inoue's construction in detail, since we shall make extensive use of it.
We pick three elliptic
curves $E_1$, $E_2$, $E_3$, together with points $T_j \in E_j$ of order~2.
On each~$E_j$, we then have two commuting involutions, namely the translation
by~$T_j$ and the negation map. We now consider the group
$\Gamma = \langle \gamma_1, \gamma_2, \gamma_3 \rangle$
of automorphisms of the product $E_1 \times E_2 \times E_3$ with
\begin{align*}
   \gamma_1 &\colon (P_1, P_2, P_3) \longmapsto (P_1, P_2+T_2, -P_3+T_3) \\
   \gamma_2 &\colon (P_1, P_2, P_3) \longmapsto (-P_1+T_1, P_2, P_3+T_3) \\
   \gamma_3 &\colon (P_1, P_2, P_3) \longmapsto (P_1+T_1, -P_2+T_2, P_3) \,.
\end{align*}
It can be checked that none of the $\gamma_j$ and none of the
$\gamma_i \gamma_j$ (with $i \neq j$)
have fixed points on $E_1 \times E_2 \times E_3$. However, the automorphism
$\gamma = \gamma_1 \gamma_2 \gamma_3$
is given by $(P_1, P_2, P_3) \mapsto (-P_1, -P_2, -P_3)$, so it has $4^3$
fixed points, namely the points $(P_1, P_2, P_3)$ such that $P_j$ is a
2-torsion point on~$E_j$ for each~$j$.

Now let $x_j$ be a function of degree~2 on~$E_j$ that is invariant under the negation
map and changes sign under translation by~$T_j$. Then the product $x_1 x_2 x_3$
is invariant under~$\Gamma$, and so the surface $X \subset E_1 \times E_2 \times E_3$
defined by $x_1 x_2 x_3 = c$ is also $\Gamma$-invariant.

\begin{Remark}
  It can be checked that $X$ is smooth unless it contains a fixed point of~$\gamma$
  or $c = 0$~\cite{inoue}. So as long
  as we avoid a finite set of values for~$c$, the surface~$X$ will be smooth
  and $\Gamma$ acts on it without fixed points. We let $S = X/\Gamma$ be the
  quotient. Then $\pi \colon X \to S$ is an unramified covering that is Galois
  with Galois group~$\Gamma$.
  Both $X$ and~$S$ are surfaces of general type.
\end{Remark}

In \cite{burniat1}*{Theorem 2.3} the following is shown:
\begin{Theorem}
  Primary Burniat surfaces are exactly the surfaces $S = X/\Gamma$ as above
  such that $\Gamma$ acts freely on $X$.
\end{Theorem}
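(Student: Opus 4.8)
The plan is to prove the two inclusions separately: every $S = X/\Gamma$ with $\Gamma$ acting freely is one of Burniat's bidouble covers, and conversely every primary Burniat surface is such a quotient. For the first inclusion the bridge is an enlarged automorphism group of $A \colonequals E_1 \times E_2 \times E_3$. Besides $\Gamma$, let $\sigma_j$ be the negation on $E_j$, put $N = \langle \sigma_1, \sigma_2, \sigma_3 \rangle$, and set $H \colonequals \langle \Gamma, N \rangle$. Since negations and translations by $2$-torsion points commute, $H$ is elementary abelian; writing $\gamma_1 = \sigma_3\tau_1$, $\gamma_2 = \sigma_1\tau_2$, $\gamma_3 = \sigma_2\tau_3$, where $\tau_1, \tau_2, \tau_3$ are the translations by $(0,T_2,T_3)$, $(T_1,0,T_3)$, $(T_1,T_2,0)$, one checks that $H \cong (\Z/2\Z)^5$, that $\Gamma \cap N = \langle \gamma \rangle$, and hence that $\bar\Gamma \colonequals H/\Gamma \cong (\Z/2\Z)^2$. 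As $H$ is abelian it preserves $X$ (each $\sigma_j$ fixes $x_j$) and normalises $\Gamma$, so $\bar\Gamma$ acts on $S = X/\Gamma$ and $S \to Y \colonequals S/\bar\Gamma = X/H$ is a bidouble cover; the task is then to identify $Y$ with a del Pezzo surface of degree $6$ and to pin down the branch data.

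For the identification I would use the functions $x_j$ to realise $E_j/\langle \sigma_j\rangle \cong \P^1$, so that $X$ maps onto the smooth $(1,1,1)$-hypersurface $Y_0 = \{x_1 x_2 x_3 = c\} \subset (\P^1)^3$, exhibiting $X/N = Y_0$ as a del Pezzo surface of degree $6$; the residual quotient $H/N \cong (\Z/2\Z)^2$ acts on $Y_0$ through the even sign changes $x_j \mapsto -x_j$, which act on the open torus of $Y_0$ by translation by $2$-torsion points, so $Y = Y_0/(H/N)$ is again a smooth toric del Pezzo surface of degree $6$, and reading off its three conic-bundle structures and its hexagon of $(-1)$-curves identifies it with $\hat{\P}^2(p_1,p_2,p_3)$. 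One then computes the branch divisor of $S \to Y$: for each $i$ it is the image in $Y$ of the union of the one-dimensional components of the fixed loci on $X$ of the elements of the coset $\sigma_i\Gamma$; working out which of those eight elements have divisorial fixed loci, and which curves on $A$ those loci are, yields $D_i = D_{i,1} + D_{i,2} + D_{i,3} + e_{i+2}$ with $D_{i,j}$ the proper transforms of three lines through $p_i$ and $D_{i,1}$ the side through $p_i$ and $p_{i+1}$ --- precisely Burniat's branch divisor, for which moreover $D_{i-1} + D_{i+1} \equiv 2L_i$. Since $\Pic(Y)$ is torsion-free, a smooth bidouble cover of $Y$ is determined by its branch divisors, so $S$ is the corresponding primary Burniat surface.

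For the converse one runs the same dictionary backwards, and this is where I expect the main work to lie. Given a primary Burniat surface $\pi \colon S \to Y$ with $Y = \hat{\P}^2(p_1,p_2,p_3)$, the three conic bundles $Y \to \P^1$ together with the branch curves in each $D_i$ single out four points on a $\P^1$, and the double cover of that $\P^1$ branched there is an elliptic curve $E_i$ carrying a distinguished $2$-torsion point $T_i$ and a degree-$2$ function $x_i$; for a suitable $c$ a degree-$8$ \'etale cover of $S$ (for instance the one attached to the relevant index-$8$ subgroup of $\pi_1(S)$, which one computes separately) is isomorphic to $X = \{x_1 x_2 x_3 = c\} \subset E_1 \times E_2 \times E_3$ with $\Gamma$ acting as the deck group --- conveniently seen by noting that this \'etale cover has irregularity $3$, hence maps to its Albanese variety, which splits as $E_1 \times E_2 \times E_3$ and receives it as a $(2,2,2)$-divisor, so that the $\Gamma$-action is forced to the stated form. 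It remains to match the non-degeneracy conditions: $\gamma = -\id$ is the only nontrivial element of $\Gamma$ with fixed points on $A$, and its $64$ fixed points --- the triples of $2$-torsion points --- are carried by $A \dashrightarrow \P^2$ precisely to the points where three of the branch lines can concur; hence $\Gamma$ acts freely on $X$ $\iff$ $X$ contains no $\gamma$-fixed point $\iff$ (since $c \neq 0$, by Inoue) $X$ is smooth $\iff$ $D = \sum_i D_i$ has no point of multiplicity $3$ $\iff$ $S$ is smooth. The principal obstacle is thus neither the group theory nor the smoothness--freeness dictionary, but the explicit geometry in between: identifying $Y = X/H$ together with its hexagon of $(-1)$-curves, determining the branch divisor $D$ precisely, and, in the reverse direction, recovering the elliptic curves $E_j$ from Burniat's branch configuration --- all of which need a concrete hold on how the negations and $2$-torsion translations in $H$ act on the exceptional curves and conic-bundle fibres of $Y$.
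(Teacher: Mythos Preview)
The paper does not actually prove this theorem: it is quoted as Theorem~2.3 of Bauer--Catanese~\cite{burniat1}, with no argument given here. So there is no ``paper's own proof'' to compare against; your proposal is a proof sketch for a result that this particular paper imports wholesale.

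That said, your outline is essentially the strategy of~\cite{burniat1}. The enlarged group $H = \langle \Gamma, \sigma_1, \sigma_2, \sigma_3\rangle \cong (\Z/2\Z)^5$ and the two factorisations
\[
  X \To X/\Gamma = S \To X/H = Y
  \qquad\text{and}\qquad
  X \To X/N = Y_0 \To X/H = Y
\]
are exactly the bridge used there, and your identification of $Y_0 = \{x_1x_2x_3 = c\}$ as a degree-$6$ del~Pezzo with the residual $(\Z/2\Z)^2$ acting by even sign changes is correct. The group-theoretic bookkeeping (e.g.\ $\gamma_j = \sigma_{j+2}\tau_j$, $\Gamma\cap N = \langle\gamma\rangle$, $H/\Gamma\cong(\Z/2\Z)^2$) checks out. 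For the converse direction, your plan --- reconstruct $E_j$ from the three pencils through the~$p_i$ and use the Albanese of the \'etale cover to realise it inside $E_1\times E_2\times E_3$ --- is again the route taken in~\cite{burniat1}; the computation of $\pi_1(S)$ and the identification of the relevant normal subgroup is indeed the substantial part, and is carried out there in detail. Your closing equivalence (free $\Gamma$-action $\iff$ smoothness of $X$ $\iff$ no triple point on $D$) is right and matches Inoue's original observation.

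In short: nothing to correct, but be aware that you are reconstructing the proof of a cited result rather than comparing with something in the present paper.
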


\begin{Remark}
  This implies that primary Burniat surfaces are classical Inoue type varieties.
\end{Remark}


\section{An explicit moduli space for primary Burniat surfaces} \label{S:moduli}

In~\cite{burniat1}*{Theorem 4.1, Theorem 4.2} the following result is shown:

\begin{Theorem} \strut
\begin{enumerate}[\upshape(1)]
\item Let $S$ be a smooth complex projective surface that is homotopically
      equivalent to a primary Burniat surface. Then $S$ is a primary Burniat
      surface.
\item The subset $M$ of the Gieseker moduli space corresponding to primary
      Burniat surfaces is an irreducible connected component, normal and
      rational of dimension~$4$.
\end{enumerate}
\end{Theorem}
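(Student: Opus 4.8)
The plan is to quote \cite{burniat1}*{Theorem 4.1, Theorem 4.2} as the authors do here, since the statement is explicitly attributed to that source. But if I were to reconstruct a proof, here is how I would proceed.

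First I would establish part (2) using the explicit description of primary Burniat surfaces as bidouble covers of the fixed del Pezzo surface $Y$ of degree~$6$. The datum of such a surface, up to isomorphism, is essentially the choice of the nine lines in the branch configuration (three through each~$p_i$), one of which is forced to be a side of the coordinate triangle; so the moduli are parametrized by choosing, for each~$i$, two further lines through~$p_i$, each depending on one parameter. This gives a~$6$-dimensional family a priori, but the action of the $2$-torus (diagonal automorphisms of~$\P^2$ fixing the coordinate points) reduces the count by~$2$, yielding dimension~$4$. One then shows this family is complete, i.e.\ that every small deformation of a primary Burniat surface stays in the family: this is the real content and uses that $H^1(S, T_S)$ has dimension~$4$ and that the Kuranishi family is identified with the family of bidouble covers. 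Irreducibility and rationality follow because the parameter space is (an open subset of) a product of $\P^1$'s modulo a torus, which is rational; normality of~$M$ would follow from unobstructedness, i.e.\ $H^2(S,T_S) = 0$ or at least that the base of the Kuranishi family is smooth.

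Next I would prove part (1), the \emph{topological rigidity} statement: a surface~$S$ homotopy equivalent to a primary Burniat surface is itself one. This is where the Inoue-type description is essential. The strategy is: (a) from the fundamental group~$\pi_1(S)$ (a topological invariant) one reconstructs the $\Gamma = (\Z/2\Z)^3$-covering $\hat{X} \to S$ corresponding to a canonically defined subgroup; (b) one shows $\hat{X}$ has the homotopy type forcing $q(\hat{X}) = 3$ and that the Albanese map embeds $\hat{X}$ as an ample divisor of multidegree $(2,2,2)$ in a product of three elliptic curves — here one uses that the Albanese variety is $3$-dimensional and the Albanese image has the right cohomology class, which is determined by the intersection form on~$H^2$; (c) one checks the $\Gamma$-action on $\hat{X}$ is induced by one on the product of elliptic curves of the prescribed form, using that the action on $\pi_1$ is rigid. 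Then $S = \hat{X}/\Gamma$ matches Inoue's construction, hence by \cite{burniat1}*{Theorem 2.3} (quoted above) is a primary Burniat surface.

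The main obstacle is step (b)–(c) of part~(1): showing that the Albanese image of the étale cover is forced to be exactly an Inoue-type hypersurface and that the group action has the rigid prescribed shape. Purely homotopy-theoretic data controls the fundamental group and the cohomology ring, but one must bridge from there to the \emph{holomorphic} structure — that the $3$-dimensional Albanese variety really splits (up to isogeny) as a product of elliptic curves compatible with the $\Gamma$-action, and that $\hat{X}$ is the specific $(2,2,2)$-divisor. This is exactly the kind of argument developed systematically for Inoue-type varieties in~\cite{bacainoue}; for the classical Burniat case it was carried out in~\cite{burniat1}, and reproducing it in detail is beyond a proof sketch. Accordingly, the honest move in the paper is simply to cite \cite{burniat1}*{Theorems 4.1 and 4.2}.
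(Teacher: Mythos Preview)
Your proposal is correct and matches the paper's approach exactly: the paper simply attributes this result to \cite{burniat1}*{Theorem 4.1, Theorem 4.2} and gives no proof, as you note at the outset. Your additional reconstruction sketch is reasonable and broadly accurate, but it goes beyond what the paper itself does here.
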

In particular, this shows that if $[S] \in M$, then $S$ is in fact a primary Burniat surface,
i.e., it can be obtained via Burniat's and Inoue's constructions.

Still, in~\cite{burniat1} no explicit model of~$M$ is given.
The goal of this section is to provide such an explicit moduli space
based on Inoue's construction.
As a by-product, we obtain another proof of its rationality.
We will also use our approach to classify the possible automorphism groups of
primary Burniat surfaces, see Section~\ref{S:auto} below.
We work over~$\C$.

We first consider a curve~$E$ of genus~$1$ given as a double
cover $\pi_E \colon E \to \P^1$ ramified in four points,
together with a point $T \in J$ of order~$2$, where $J$ is the Jacobian
elliptic curve of~$E$ (so that $E$ is a principal homogeneous space for~$J$;
more precisely, $E$ is a $2$-covering of~$J$ via $P \mapsto P - \iota_E(P)$,
where $\iota_E$ is the involution on~$E$ induced by~$\pi_E$).
There is then a coordinate~$x$ on~$\P^1$, unique up to scaling
and replacing $x$ by~$1/x$,
such that $x(\pi_E(P+T)) = -x(\pi_E(P))$ for all $P \in E$. (To see the uniqueness,
fix a point $Q \in E$ with $Q - \iota_E(Q) = T$. Then the divisor of~$x \circ \pi_E$ must be
$\pm\bigl((Q) + (\iota_E(Q)) - (Q+T') - (\iota_E(Q)+T')\bigr)$, where $T' \neq T$ is another point
of order~$2$.) As a double cover of~$\P^1$ via~$x \circ \pi_E$, $E$ is then given
by an equation of the form $y^2 = x^4 + a x^2 + b$. We fix the scaling of~$x$
up to a fourth root of unity by requiring $b = 1$. This fixes $a$ up to a sign.
So the moduli space of pairs $(E \to \P^1, T)$ as above
(i.e., $E \to \P^1$ is a double cover and $T$ is a point of order~$2$
on the Jacobian of~$E$) is $\BA^1 \setminus \{4\}$,
with the parameter for $y^2 = x^4 + a x^2 + 1$ being~$a^2$.
If we identify $E$ with~$J$ by taking the origin to be one of the ramification
points of~$x$, then $y$ changes sign under negation.
In general, the fibers $\{P,P'\}$ of $\pi_E \colon J \cong E \to \P^1$ are characterized
by $P + P' = Q$ for some point $Q \in J$; then the automorphism~$\iota_E$ of~$E$
corresponding to changing the sign of~$y$ is given by $P \mapsto Q - P$.

Consider the subvariety $\calE$ given by~$y^2 = x^4 + a x^2 z^2 + z^4$
in~$\P(1,2,1) \times (\BA^1 \setminus \{\pm 2\})$, where $x,y,z$ are the
coordinates of~$\P(1,2,1)$ and $a$ is the coordinate on~$\BA^1$
(then $x/z$ is the function denoted~$x$ in the previous paragraph).
Then by the above, the group of automorphisms of~$\calE$ that
respect the morphism to~$\BP^1$ given by $(x:z)$ and are compatible
with the fibration over~$\BA^1 \setminus \{\pm 2\}$ is generated by
\begin{align*}
  \bigl((x:y:z), a\bigr) &\longmapsto \bigl((ix:y:z), -a\bigr)\,, \\
  \bigl((x:y:z), a\bigr) &\longmapsto \bigl((x:-y:z), a\bigr) \quad\text{and}\\
  \bigl((x:y:z), a\bigr) &\longmapsto \bigl((z:y:x), a\bigr)\,.
\end{align*}
The fibers $(E_a \to \P^1,T_a)$ and~$(E_b \to \P^1,T_b)$ over $a$ and~$b$ are isomorphic
(in the sense that there is an isomorphism $E_a \to E_b$ that induces an automorphism
of~$\P^1$ and identifies $T_a$ with~$T_b$) if and only if $b = \pm a$,
and $(E_a \to \P^1,T_a)$ has extra automorphisms if and only if~$a = 0$.

We now use a similar approach to classify surfaces $X \subset E_1 \times E_2 \times E_3$
as in Inoue's construction of primary Burniat surfaces.
By the above, we can define $X$ as a subvariety of
$E_1 \times E_2 \times E_3 \subset \P(1,2,1)^3$ by equations
\begin{equation} \label{E:eqns} \renewcommand{\arraystretch}{1.2}
  \left\{
  \begin{array}{r@{{}={}}l}
    y_1^2 & x_1^4 + a_1 x_1^2 z_1^2 + z_1^4 \\
    y_2^2 & x_2^4 + a_2 x_2^2 z_2^2 + z_2^4 \\
    y_3^2 & x_3^4 + a_3 x_3^2 z_3^2 + z_3^4 \\
    x_1 x_2 x_3 & c z_1 z_2 z_3
  \end{array}
  \right.
\end{equation}
with suitable parameters $a_1, a_2, a_3 \neq \pm 2$ and $c \neq 0$.
Here $x_j/z_j$ is the coordinate on~$E_j$ that changes sign under addition
of~$T_j$ (denoted~$x$ above).
We have to determine for which choices of the parameters $(a_1,a_2,a_3,c)$
we obtain isomorphic surfaces~$X/\Gamma$.

Let $A = \bigl(\BA^1 \setminus \{\pm 2\}\bigr)^3 \times \bigl(\BA^1 \setminus \{0\}\bigr)$
and consider $\calX \subset \P(1,2,1)^3 \times A$ as given by equations~\eqref{E:eqns},
where $x_j,y_j,z_j$ are the coordinates on the three factors~$\P(1,2,1)$
(for $j = 1,2,3$) and $a_1,a_2,a_3,c$ are the coordinates on~$A$.
Then the group of automorphisms of~$\P(1,2,1)^3 \times A$ that fix~$\calX$ and are compatible
with the fibration over~$A$ is generated by the following automorphisms,
which are specified by the image of the point
\[ \bigl((x_1:y_1:z_1), (x_2:y_2:z_2), (x_3:y_3:z_3), (a_1,a_2,a_3,c)\bigr) \,. \]
\[ \begin{array}{r@{\colon\quad}llll}
     \rho_1    & \bigl((ix_1:y_1:z_1), &(x_2:y_2:z_2),  &(x_3:y_3:z_3),  &(-a_1,a_2,a_3,ic)\bigr) \\
     \rho_2    & \bigl((x_1:y_1:z_1),  &(ix_2:y_2:z_2), &(x_3:y_3:z_3),  &(a_1,-a_2,a_3,ic)\bigr) \\
     \rho_3    & \bigl((x_1:y_1:z_1),  &(x_2:y_2:z_2),  &(ix_3:y_3:z_3), &(a_1,a_2,-a_3,ic)\bigr) \\
     \gamma'_1 & \bigl((x_1:-y_1:z_1), &(x_2:y_2:z_2),  &(x_3:y_3:z_3),  &(a_1,a_2,a_3,c)\bigr) \\
     \gamma'_2 & \bigl((x_1:y_1:z_1),  &(x_2:-y_2:z_2), &(x_3:y_3:z_3),  &(a_1,a_2,a_3,c)\bigr) \\
     \gamma'_3 & \bigl((x_1:y_1:z_1),  &(x_2:y_2:z_2),  &(x_3:-y_3:z_3), &(a_1,a_2,a_3,c)\bigr) \\
     \tau      & \bigl((z_1:y_1:x_1),  &(z_2:y_2:x_2),  &(z_3:y_3:x_3),  &(a_1,a_2,a_3,1/c)\bigr) \\
     \sigma    & \bigl((x_2:y_2:z_2),  &(x_3:y_3:z_3),  &(x_1:y_1:z_1),  &(a_2,a_3,a_1,c)\bigr) \\
     \tau'     & \bigl((x_2:y_2:z_2),  &(x_1:y_1:z_1),  &(x_3:y_3:z_3),  &(a_2,a_1,a_3,c)\bigr)
   \end{array}
\]
Note that
$\Gamma = \langle \rho_1^2 \rho_2^2 \gamma'_2, \rho_2^2 \rho_3^2 \gamma'_3,
                  \rho_3^2 \rho_1^2 \gamma'_1 \rangle$
fixes $\calX$ fiber-wise. Since we are interested in the quotients of the fibers by this
action, we restrict to the subgroup consisting of elements normalizing~$\Gamma$.
This means that we exclude~$\tau'$.

So let $\tilde{\Gamma} = \langle \rho_1, \rho_2, \rho_3, \gamma'_1, \gamma'_2, \gamma'_3, \tau, \sigma \rangle$.
The first six elements generate an abelian normal subgroup isomorphic to
$(\Z/4\Z)^3 \times (\Z/2\Z)^3$ on which $\tau$ acts by negation and $\sigma$ acts
by cyclically permuting the factors in both $(\Z/4\Z)^3$ and~$(\Z/2\Z)^3$.
In particular, $\tilde{\Gamma}$ has order $4^3 \cdot 2^3 \cdot 2 \cdot 3 = 2^{10} \cdot 3$.
We note that the kernel of the natural homomorphism $\tilde{\Gamma} \to \Aut(A)$
is $\tilde{\Gamma}_0 \colonequals \langle \Gamma, \gamma'_1, \gamma'_2 \rangle$
(we also have $\gamma'_3 = \gamma \gamma'_1 \gamma'_2 \in \tilde{\Gamma}_0$).
So on the quotient~$S$ of every fiber~$X$ by~$\Gamma$, we have an action
of~$\tilde{\Gamma}_0/\Gamma$, and the quotient of~$S$ by this action is the
del Pezzo surface~$Y$ in Burniat's construction.

The image~$\tilde{\Gamma}_A$ of~$\tilde{\Gamma}$ in~$\Aut(A)$ is generated by
\begin{align*}
  \bar{\rho}_1 \colon (a_1, a_2, a_3, c) &\longmapsto (-a_1, a_2, a_3, ic) \\
  \bar{\rho}_2 \colon (a_1, a_2, a_3, c) &\longmapsto (a_1, -a_2, a_3, ic) \\
  \bar{\rho}_3 \colon (a_1, a_2, a_3, c) &\longmapsto (a_1, a_2, -a_3, ic) \\
  \bar{\tau}   \colon (a_1, a_2, a_3, c) &\longmapsto (a_1, a_2, a_3, 1/c) \\
  \bar{\sigma} \colon (a_1, a_2, a_3, c) &\longmapsto (a_2, a_3, a_1, c)
\end{align*}
Note that $\zeta \colonequals \bar{\rho}_1^2 = \bar{\rho}_2^2 = \bar{\rho}_3^2$
just changes the sign of~$c$. The order of~$\tilde{\Gamma}_A$ is~$96$.
There are twenty conjugacy classes. The table below gives a representative element,
its order and the size of each class.
\[ \begin{array}{|r|cccccccccc|} \hline
     \text{rep.\large\strut} & 1 & \zeta & \bar{\rho}_1 & \bar{\rho}_1 \bar{\rho}_2 &
       \bar{\rho}_1 \bar{\rho}_2^{-1} & \bar{\rho}_1 \bar{\rho}_2 \bar{\rho_3} &
       \bar{\sigma} & \zeta \bar{\sigma} & \bar{\rho}_1 \bar{\sigma} & \bar{\sigma}^2 \\
     \text{order} & 1 & 2 & 4 & 2 & 2 & 4 & 3 & 6 & 12 & 3 \\
     \text{size}  & 1 & 1 & 6 & 3 & 3 & 2 & 4 & 4 &  8 & 4 \\\hline \hline
     \text{rep.\large\strut} & \zeta \bar{\sigma}^2 & \bar{\rho}_1 \bar{\sigma}^2 &
       \bar{\tau} & \bar{\rho}_1 \bar{\tau} & \bar{\rho}_1 \bar{\rho}_2 \bar{\tau} &
       \bar{\rho}_1 \bar{\rho}_2 \bar{\rho}_3 \bar{\tau} & \bar{\sigma} \bar{\tau} &
       \bar{\rho}_1 \bar{\sigma} \bar{\tau} & \bar{\sigma}^2 \bar{\tau} &
       \bar{\rho}_1 \bar{\sigma}^2 \bar{\tau} \\
     \text{order} & 6 & 12 & 2 & 2 & 2 & 2 & 6 & 6 & 6 & 6 \\
     \text{size}  & 4 &  8 & 2 & 6 & 6 & 2 & 8 & 8 & 8 & 8 \\\hline
   \end{array}
\]

Given a primary Burniat surface~$S$, the $(\Z/2\Z)^3$-cover $\pi \colon X \to S$
with $X \subset E_1 \times E_2 \times E_3$ is uniquely determined up to isomorphism
as the unramified covering corresponding to the unique normal abelian subgroup
of~$\pi_1(S)$ such that the quotient is~$(\Z/2\Z)^3$.
(See~\cite{burniat1}*{Section~3} for an explicit description of~$\pi_1(S)$,
which implies this.)
It follows that the (coarse) moduli space~$M$ of primary Burniat surfaces
is $A'/\tilde{\Gamma}_A$, where $A'$ is the open subset of~$A$ over which
the fibers of~$\calX$ do not pass through a fixed point of~$\gamma$.
We determine the invariants for this group action. We have the normal subgroup
$\langle \zeta, \bar{\tau} \rangle \cong (\Z/2\Z)^2$ that fixes $(a_1, a_2, a_3)$
and has orbit $\{c, -c, 1/c, -1/c\}$ on~$c$. The invariants for this action
are $a_1, a_2, a_3$ and~$v' \colonequals c^2 + 1/c^2$.
The action of~$\bar{\rho}_j$ on these invariants is given by changing the
signs of~$a_j$ and~$v'$, so the invariants of the action by the group
generated by the~$\bar{\rho}_j$ and~$\bar{\tau}$ are $a_1^2$, $a_2^2$, $a_3^2$,
$v \colonequals {v'}^2$ and $w \colonequals a_1 a_2 a_3 v'$. Finally,
$\bar{\sigma}$ fixes $v$ and~$w$ and permutes $a_1^2, a_2^2, a_3^2$ cyclically.
So the invariants of the action of~$\tilde{\Gamma}_A$ are the
elementary symmetric polynomials in the~$a_j^2$, namely
\begin{gather*}
  u_1 = a_1^2 + a_2^2 + a_3^2, \quad
  u_2 = a_1^2 a_2^2 + a_2^2 a_3^2 + a_3^2 a_1^2, \quad
  u_3 = a_1^2 a_2^2 a_3^2, \\
  \text{together with} \quad d = (a_1^2 - a_2^2)(a_2^2 - a_3^2)(a_3^2 - a_1^2)
  \quad \text{and $v$, $w$.}
\end{gather*}

This exhibits the moduli space~$M$ as an open subset of a bidouble cover of~$\BA^4$
(with coordinates $u_1$, $u_2$, $u_3$, $v$) as follows.

\begin{Theorem}\label{modulispace}
  The moduli space~$M$ of primary Burniat surfaces is the subset
  of~$\BA^6$ defined by the following equations and inequalities.
  \begin{align*}
    -4 u_1^3 u_3 + u_1^2 u_2^2 + 18 u_1 u_2 u_3 - 4 u_2^3 - 27 u_3^2 &= d^2 \\
    u_3 v &= w^2 \\
    (v - u_1)^2 + u_2 (v - 4) + u_3 + (u_1 + v - 8) w &\neq 0 \\
    64 - 16 u_1 + 4 u_2 - u_3 &\neq 0
  \end{align*}
\end{Theorem}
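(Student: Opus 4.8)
The plan is to identify $M = A'/\tilde{\Gamma}_A$ explicitly as a subset of~$\BA^6$, using the invariants $u_1,u_2,u_3,d,v,w$ found above; throughout write $b_j\colonequals a_j^2$ and $v'\colonequals c^2+c^{-2}$. The first task is to pin down the whole invariant ring. We know from the discussion above that $\C[a_1,a_2,a_3,c^{\pm1}]^{\tilde{\Gamma}_A}$ is generated by $u_1,u_2,u_3,d,v,w$, and these satisfy
\[ d^2 = \disc\bigl(T^3 - u_1T^2 + u_2T - u_3\bigr) = -4u_1^3u_3 + u_1^2u_2^2 + 18u_1u_2u_3 - 4u_2^3 - 27u_3^2, \qquad w^2 = u_3v, \]
the first because $d=(b_1-b_2)(b_2-b_3)(b_3-b_1)$ while the~$b_j$ are the roots of $T^3-u_1T^2+u_2T-u_3$, the second because $w=a_1a_2a_3\,v'$ and $v=v'^2$. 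I would then check that these generate the full ideal of relations: $\disc$ is not a square in $\C[u_1,u_2,u_3]$ and $u_3v$ is not a square in $\bigl(\C[u_1,u_2,u_3,d]/(d^2-\disc)\bigr)[v]$, so $\C[u_1,u_2,u_3,d,v,w]/(d^2-\disc,\,w^2-u_3v)$ is an integral domain of Krull dimension $4=\dim A$, and hence the natural surjection from it onto the invariant ring --- a surjection of finitely generated $\C$-domains of equal dimension --- is an isomorphism. The condition $a_j\neq\pm2$ for all~$j$ is $\tilde{\Gamma}_A$-invariant and amounts to $g\colonequals\prod_j(4-b_j)=64-16u_1+4u_2-u_3\neq0$; so $A/\tilde{\Gamma}_A$ is the subset of~$\BA^6$ given by the two displayed equations together with $g\neq0$. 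This accounts for the first equation, the second equation, and the last inequality of the theorem.

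It remains to pass from~$A$ to~$A'$, i.e.\ to remove the locus over which the fiber of~$\calX$ contains a fixed point of~$\gamma$. With the chosen origin on~$E_j$, negation acts by $y_j\mapsto-y_j$, so the four $2$-torsion points of~$E_j$ are exactly those with $y_j=0$, equivalently those with $\xi_j\colonequals x_j/z_j$ satisfying $\xi_j^4+a_j\xi_j^2+1=0$. A fixed point of~$\gamma$ on $E_1\times E_2\times E_3$ has all three coordinates $2$-torsion, and lies on the fiber $X=X_{(a_1,a_2,a_3,c)}$ precisely when $\xi_1\xi_2\xi_3=c$ for a suitable such choice; squaring, and flipping one sign if necessary, this happens iff $c^2=\eta_1\eta_2\eta_3$ for some choice of roots $\eta_j$ of $T^2+a_jT+1$. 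Over the eight sign patterns this says $\prod_{\eps\in\{\pm1\}^3}\bigl(c^2-\eta_1^{\eps_1}\eta_2^{\eps_2}\eta_3^{\eps_3}\bigr)=0$; grouping the eight products into their four reciprocal pairs, with pairwise sums $n_1,\dots,n_4$, and dividing by the unit~$c^8$, this becomes $h\colonequals\prod_{k=1}^4(v'-n_k)=0$. One checks that $h$ is $\tilde{\Gamma}_A$-invariant, so it descends to a regular function on $A/\tilde{\Gamma}_A$, and we must compute it.

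Finally I would expand~$h$ in the invariants. Its coefficients as a polynomial in~$v'$ are the elementary symmetric functions $e_i(n_1,\dots,n_4)$, which one evaluates using $\sum_{\eps}\eta_1^{\eps_1}\eta_2^{\eps_2}\eta_3^{\eps_3}=\prod_j(\eta_j+\eta_j^{-1})=\prod_j(-a_j)$ and $\sum_{\eps}\eta_1^{2\eps_1}\eta_2^{2\eps_2}\eta_3^{2\eps_3}=\prod_j(\eta_j^2+\eta_j^{-2})=\prod_j(a_j^2-2)$, with analogous bookkeeping for $e_3$ and~$e_4$. The outcome is $e_1(n)=-a_1a_2a_3$, $e_2(n)=u_2-2u_1$, $e_3(n)=a_1a_2a_3(8-u_1)$ and $e_4(n)=u_1^2-4u_2+u_3$. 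The crucial feature is that $e_1(n)$ and~$e_3(n)$ are divisible by~$a_1a_2a_3$, so that --- substituting $v'^2=v$ and $a_1a_2a_3\,v'=w$ --- the whole of~$h$ becomes polynomial in the invariants:
\[ h = v^2 + vw + (u_2-2u_1)v + (u_1-8)w + u_1^2 - 4u_2 + u_3 = (v-u_1)^2 + u_2(v-4) + u_3 + (u_1+v-8)w. \]
Thus the image of $A\setminus A'$ in $A/\tilde{\Gamma}_A$ is the zero set of the polynomial appearing in the third (in)equality, and $M=A'/\tilde{\Gamma}_A$ is exactly the locus in~$\BA^6$ defined by the two equations and the two inequalities.

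The routine parts are the two relations and the description of $A/\tilde{\Gamma}_A$; the main obstacle is the last step, the symmetric-function computation expressing~$h$ in the invariants and recognizing the result as the stated polynomial. Two structural observations keep it manageable: $h$ is symmetric under all permutations of $(a_1,a_2,a_3)$, so~$d$ cannot occur; and $v'$ and~$a_1a_2a_3$ enter only through $v=v'^2$ and $w=a_1a_2a_3\,v'$, which both forces, and is guaranteed by, the divisibility of $e_1(n)$ and~$e_3(n)$ by~$a_1a_2a_3$.
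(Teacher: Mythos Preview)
Your argument is correct and follows the same overall strategy as the paper: identify $M=A'/\tilde{\Gamma}_A$ with a locally closed subset of~$\BA^6$ via the invariants $u_1,u_2,u_3,d,v,w$, write down the two relations, and then translate the open conditions defining~$A'$ inside~$A$ into invariant form. The differences are in execution rather than conception.

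The paper's own proof is terse: it notes that the first relation is the discriminant of $X^3-u_1X^2+u_2X-u_3$, that the last inequality encodes $a_j^2\neq 4$, and for the third inequality it simply refers forward to the discriminant~$D$ of~$X$ computed in Section~\ref{S:explicit} (there in the more general arithmetic setting with parameters $r_j,s_j,t_j$). Your treatment is more self-contained on two points. First, you explicitly verify that the two displayed relations generate the full ideal of relations among the six invariants, via the dimension argument that $\C[u_1,u_2,u_3,d,v,w]/(d^2-\disc,\,w^2-u_3v)$ is an integral domain of Krull dimension~$4$ surjecting onto the invariant ring; the paper leaves this implicit. Second, rather than specializing the general formula for~$D$ from Section~\ref{S:explicit}, you compute the fixed-point locus directly by the symmetric-function calculation of $e_i(n_1,\dots,n_4)$, which is cleaner in the normalized $(r_j,t_j)=(1,1)$ setting and makes transparent why the result is polynomial in $v$ and~$w$ (your observation that $e_1(n)$ and~$e_3(n)$ are divisible by $a_1a_2a_3$). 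Specializing the paper's~$D$ to $r_j=t_j=1$, $s_j=a_j$ and dividing by~$c^8$ yields exactly your~$h$, so the two routes agree.
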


\begin{proof}
  This follows from the discussion above. Note that the expression on the
  left hand side of the first equation is the discriminant
  of $X^3 - u_1 X^2 + u_2 X - u_3$.
  The first inequality is equivalent to the non-vanishing of the discriminant of~$X$
  as defined in Section~\ref{S:explicit} below and the second inequality encodes the
  non-singularity of the curves $y_j^2 = x_j^4 +a_j x_j^2 z_j^2 + z_j^4$
  (which is equivalent to $a_j^2 \neq 4$).
\end{proof}

\begin{Corollary}
  $M$ is rational.
\end{Corollary}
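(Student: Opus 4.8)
The plan is to use that rationality is a birational invariant, so it suffices to prove that the (irreducible, $4$-dimensional) affine variety $V \subset \BA^6$ cut out by the two equations $d^2 = \disc(X^3 - u_1 X^2 + u_2 X - u_3)$ and $w^2 = u_3 v$ of Theorem~\ref{modulispace} is rational; the two open conditions only remove a proper closed subset and so do not affect the birational type. The first step is to eliminate the redundant variables. On the dense open locus $u_3 \neq 0$ (recall $u_3 = a_1^2 a_2^2 a_3^2$) the relation $w^2 = u_3 v$ determines $v = w^2/u_3$, so projection to the coordinates $(u_1, u_2, u_3, d, w)$ identifies $V$ birationally with the hypersurface $W \subset \BA^5$ defined by $d^2 = \disc(X^3 - u_1 X^2 + u_2 X - u_3)$. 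Since $w$ does not occur in this equation, $W \cong W_0 \times \BA^1$, where $W_0 \subset \BA^4$ is the hypersurface $\{d^2 = \disc\}$ in the coordinates $(u_1, u_2, u_3, d)$. Thus everything reduces to proving that $W_0$ is rational.

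Geometrically, $W_0$ is the double cover of the $u$-space~$\BA^3$ branched along the discriminant hypersurface; equivalently, writing $b_1, b_2, b_3$ for the three roots, it is the quotient $\BA^3/A_3$ by the even permutations, with $u_1, u_2, u_3$ the elementary symmetric functions and $d = (b_1 - b_2)(b_2 - b_3)(b_3 - b_1)$ the Vandermonde product. The genuine content of the Corollary is the rationality of this quotient: a quotient of affine space by a finite group need not be rational in general (Noether's problem), so the rationality of~$W_0$ is the one point that requires an argument rather than a formal reduction. I expect this to be the main obstacle, and I would handle it by an explicit parametrization rather than by invoking general rationality results for~$A_3$.

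To parametrize~$W_0$, I would first translate $X \mapsto X + u_1/3$ to bring the cubic into depressed form $X^3 + p X + q$. Since the discriminant is translation-invariant and the change $(u_1, u_2, u_3) \mapsto (u_1, p, q)$ is a triangular polynomial automorphism of~$\BA^3$, this rewrites the defining equation as $d^2 = -4 p^3 - 27 q^2$, in which $u_1$ no longer appears; hence $W_0 \cong \BA^1 \times T$ with $T \subset \BA^3$ the affine surface $d^2 + 27 q^2 = -4 p^3$. Working over~$\C$, the quadratic form $d^2 + 27 q^2$ splits into two linear factors, so a linear change of coordinates $(d, q) \mapsto (m, n)$ turns the equation of~$T$ into $m n = -4 p^3$; projecting to the $(m, p)$-plane and solving $n = -4 p^3 / m$ on the locus $m \neq 0$ exhibits $T$ as birational to~$\BA^2$. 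Chaining these birational equivalences gives $V \sim W_0 \times \BA^1 \sim T \times \BA^2 \sim \BA^4$, which proves that $M$ is rational (and incidentally recovers $\dim M = 4$).
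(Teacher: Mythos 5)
Your proof is correct and is essentially the paper's own argument: the paper likewise eliminates $v$ via the relation $u_3 v = w^2$ (making $w$ a free coordinate so that $M$ is birational to $\BA^1$ times the double cover $d^2 = \disc$ of~$\BA^3$), rewrites the discriminant equation by the depressed-cubic substitution---its displayed identity is your $d^2 = -4p^3 - 27q^2$, up to a sign slip in the printed formula, which should read $4(u_1^2 - 3u_2)^3 = (27u_3 + 2u_1^3 - 9u_1u_2)^2 + 27d^2$---and reduces to the product of~$\BA^2$ with the surface $x^3 + y^2 + z^2 = 0$. The only difference is that you make the rationality of this surface explicit by splitting the binary quadratic form over~$\C$ into $mn = -4p^3$ and projecting, a step the paper leaves to the reader.
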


\begin{proof}
  First note that $M$ is birational to the product of~$\BA^1$ with the double
  cover of~$\BA^3$ given by the first equation (we can eliminate $v$ using the
  second equation, and then $w$ is a free variable).
  The first equation is equivalent to
  \[ 4 (3 u_2 - u_1^2)^3 = (27 u_3 + 2 u_1^3 - 9 u_1 u_2)^2 + 27 d^2 \,. \]
  By a coordinate change, we reduce to the product of~$\BA^2$ with the rational
  surface given by $x^3 + y^2 + z^2 = 0$.
\end{proof}


\section{Automorphism groups of primary Burniat surfaces} \label{S:auto}

We now consider the group of automorphisms of a primary Burniat surface~$S$.
Since the covering $\pi \colon X \to S$ is unique, every automorphism of~$S$
lifts to an element of~$\tilde{\Gamma}$ fixing~$X$. We have already seen
that $\Aut(S)$ always contains
$\bar{\Gamma}_0 \colonequals \tilde{\Gamma}_0/\Gamma \cong (\Z/2\Z)^2$;
since $\tilde{\Gamma}_0$ is normal in~$\tilde{\Gamma}$, this is a normal
subgroup of~$\Aut(S)$.

Additional automorphisms of~$X$ correspond to elements of~$\tilde{\Gamma}$ that fix the
parameters~$(a_1, a_2, a_3, c)$. There are the following possibilities
for such elements modulo~$\tilde{\Gamma}_0$. We give their images in~$\tilde{\Gamma}_A$,
up to conjugation. Note that $\zeta$, $\bar{\rho}_1$, $\bar{\rho}_1 \bar{\rho}_2 \bar{\rho}_3$,
$\bar{\rho}_1 \bar{\rho}_2$, $\zeta \bar{\sigma}$, $\bar{\rho}_1 \bar{\sigma}$,
$\zeta \bar{\sigma}^2$, $\bar{\rho}_1 \bar{\sigma}^2$ map $c$ to $\pm i c$ or~$-c$
and cannot fix any $c \neq 0$. Also, $\bar{\rho}_1 \bar{\rho}_2 \bar{\rho}_3 \bar{\tau}$
can only fix~$(a_1,a_2,a_3,c)$ when $a_1 = a_2 = a_3 = 0$ and $c^4 + 1 = 0$,
which is excluded (it corresponds to a singular~$X$).
\begin{enumerate}[(i)]
  \item $\bar{\tau}$, corresponding to $c = 1/c$; this is $v = 4$ on~$M$. \\
        Then $\Aut(S)$ acquires an extra involution~$\tau'$ that commutes
        with~$\bar{\Gamma}_0$. We write $M_1$ for the corresponding subvariety of~$M$.
  \item $\bar{\rho}_1 \bar{\rho}_2^{-1}$, corresponding to $a_1 = a_2 = 0$;
        this is $u_2 = u_3  = 0$ on~$M$, which implies $d = w = 0$. \\
        Then $\Aut(S)$ acquires an element~$\rho'$ of order~$4$ that commutes
        with~$\bar{\Gamma}_0$ and whose square is in~$\bar{\Gamma}_0$.
        We write $M_2$ for the corresponding subvariety of~$M$.
  \item $\bar{\rho}_1 \bar{\tau}$, corresponding to $a_1 = 0$ and $c = i/c$;
        this is $u_3 = v = 0$ on~$M$, which implies $w = 0$. \\
        Then $\Aut(S)$ acquires an extra involution~$\tau''$ that commutes
        with~$\bar{\Gamma}_0$. We write $M_3$ for the corresponding subvariety of~$M$.
  \item $\bar{\sigma}$, corresponding to $a_1 = a_2 = a_3$;
        this is $d = 0$, $u_1^2 = 3 u_2$, $u_1 u_2 = 9 u_3$ on~$M$. \\
        Then $\Aut(S)$ acquires an extra element~$\sigma'$ of order~$3$ that acts
        non-trivially on~$\bar{\Gamma}_0$. The subgroup of~$\Aut(S)$ generated
        by~$\bar{\Gamma}_0$ and the additional automorphism is isomorphic to the
        alternating group~$A_4$. We write $M_4$ for the corresponding subvariety of~$M$.
\end{enumerate}
By considering the action on~$(a_1, a_2, a_3, c)$, it can be checked that elements
of other conjugacy classes can only be present when we are in one of the four
cases (i)--(iv) above. More precisely, of the $14$~subgroups up to conjugacy
that do not contain one of the `forbidden' elements given above, four cannot
occur, since the action implies that there is actually a larger group present.
The ten remaining groups correspond to those given below.

Note that $M_1 \cap M_3 = M_3 \cap M_4 = \emptyset$. We obtain the following
stratification of~$M$. The leftmost column gives the dimension.
\[ \xymatrix{ 4 & & & & M \ar@{-}[lldd] \ar@{-}[ldd] \ar@{-}[d] \ar@{-}[ddr] \\
              3 & & & & M_1 \ar@{-}[ldd] \ar@{-}[rdd] \\
              2 & & M_3 \ar@{-}[d]
                  & M_2 \ar@{-}[ld] \ar@{-}[d] \ar@{-}[rd]
                  &
                  & M_4 \ar@{-}[ld] \ar@{-}[d] \\
              1 & & M_2 \cap M_3
                  & M_1 \cap M_2 \ar@{-}[rd]
                  & M_2 \cap M_4 \ar@{-}[d]
                  & M_1 \cap M_4 \ar@{-}[ld] \\
              0 & & & & M_1 \cap M_2 \cap M_4
            }
\]
Note that $M_1 \cap M_2 \cap M_4$ is the single point
$(u_1,u_2,u_3,d,v,w) = (0,0,0,0,4,0)$, which is represented by
$(a_1,a_2,a_3,c) = (0,0,0,1)$.

The following table specifies $\Aut(S)$ and $\Aut(S)/\bar{\Gamma}'$
for the various strata. We write $C_n$ for a cyclic group
of order~$n$ and $D_4$ for the dihedral group of order~$8$.
For a group~$G$, $(G \wr C_3)/G$ denotes the wreath product $G^3 \rtimes C_3$
divided by the diagonal image of~$G$ in the normal subgroup~$G^3$.
Note that $A_4 \cong (C_2 \wr C_3)/C_2$.
The semidirect product $(C_4 \wr C_3)/C_4 \rtimes C_2$ in the last line
is such that the nontrivial element of~$C_2$ (which is~$\tau'$)
acts on $(C_4 \wr C_3)/C_4$
by simultaneously inverting the elements in the $C_4$~components.
The table can be checked by observing that ${\tau'}^2 = {\tau''}^2 = 1$,
${\rho'}^2 \in \bar{\Gamma}_0$, ${\rho'}^4 = 1$, $\tau' \rho' = {\rho'}^{-1} \tau'$,
$\tau'' \rho' = {\rho'}^{-1} \tau''$, $\tau' \sigma' = \sigma' \tau'$,
$(\rho' \sigma')^3 = 1$, and $\tau'$, $\rho'$, $\tau''$ commute with~$\bar{\Gamma}_0$,
whereas $\sigma'$ acts non-trivially on it.
\[ \renewcommand{\arraystretch}{1.1}
   \begin{array}{|c|c|c|} \hline
     \text{stratum\large\strut} & \Aut(S)      & \Aut(S)/\bar{\Gamma}' \\\hline
     M                     & C_2^2             & C_1 \text{\large\strut} \\
     M_1                   & C_2^3             & C_2 \\
     M_2                   & C_2 \times C_4    & C_2 \\
     M_3                   & C_2^3             & C_2 \\
     M_4                   & A_4               & C_3 \\
     M_1 \cap M_2          & C_2 \times D_4    & C_2^2 \\
     M_1 \cap M_4          & C_2 \times A_4    & C_6 \\
     M_2 \cap M_3          & C_2 \times D_4    & C_2^2 \\
     M_2 \cap M_4          & (C_4 \wr C_3)/C_4 & A_4 \\
     M_1 \cap M_2 \cap M_4 & (C_4 \wr C_3)/C_4 \rtimes C_2 & A_4 \times C_2 \\\hline
   \end{array}
\]



\section{Low genus curves on primary Burniat surfaces} \label{S:low}

We now study the set of curves of geometric genus $0$ or~$1$ on a primary
Burniat surface~$S$. This is more easily done using Inoue's construction
of~$S$ as an \'etale quotient of a hypersurface~$X$ in a product $E_1 \times E_2 \times E_3$
of three elliptic curves. Since $X$ does not contain rational curves, the same
is true for~$S$. So it remains to find the curves of geometric genus~$1$ on~$S$.
Such curves are images of smooth curves of genus~$1$ contained in~$X$
under the quotient map $\pi \colon X \to S$, so we need to classify these.

Let $E \subset X$ be a curve of genus~$1$. The three projections of
$E_1 \times E_2 \times E_3$ give us three morphisms $\psi_j \colon E \to X \to E_j$,
which must be translations followed by isogenies or constant.
There are three cases. Note that at least one of these morphisms must be
non-constant.

\subsection{Two of the $\psi_j$ are constant} \label{SS:2const}

Say $\psi_2$ and~$\psi_3$ are constant, with values $(\xi_2 : \eta_2 : \zeta_2)$
and $(\xi_3 : \eta_3 : \zeta_3)$. If $\xi_2 \xi_3 \neq 0$, this would force
$x_1 = c (\zeta_2 \zeta_3/\xi_2 \xi_3) z_1$. and $\psi_1$ would have to be
constant as well. So $\xi_2 = 0$ (say). Since $c \neq 0$, we must then have
$\zeta_3 = 0$. This gives two choices for the image of~$\psi_2$ and two
choices for the image of~$\psi_3$; by interchanging the roles of $\xi$ and~$\zeta$,
we obtain four further choices. This gives rise to eight copies of~$E_1$ in~$X$
that are pairwise disjoint. Under the action of~$\Gamma$, they form two orbits
of size four, so on~$S$ we obtain two copies of $E_1/\langle T_1 \rangle$.
In the same way, we have two copies each of $E_2/\langle T_2 \rangle$
and of $E_3/\langle T_3 \rangle$. These six smooth curves of genus~$1$
are arranged in the form of a hexagon, with the two images of~$E_j$ corresponding
to opposite sides, see Figure~\ref{Fighex}.
In Burniat's original construction, these curves are
obtained as preimages of the three lines joining the three points that are blown up
and the three exceptional divisors obtained from blowing up the points.

\begin{Definition}
  We call these six curves the \emph{curves at infinity} on~$S$.
\end{Definition}

\begin{figure}[htb]
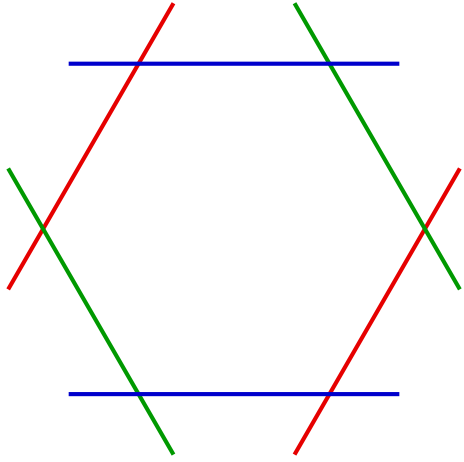

  \hrulefill
  \begin{center}
    \Gr{hexagon}{0.6\textwidth}
  \end{center}
  \caption{The six curves at infinity.}
  \label{Fighex}
  \hrulefill
\end{figure}

\subsection{Exactly one of the $\psi_j$ is constant} \label{SS:1const}

Say $\psi_1$ is constant. The fibers of any of the projections $X \to E_j$
are curves of genus~$5$, which are generically smooth. Taking $j = 1$,
we get explicit equations for the fiber in the form
\begin{align*}
  y_2^2 &= x_2^4 + a_2 x_2^2 z_2^2 + z_2^4 \\
  y_3^2 &= x_3^4 + a_3 x_3^2 z_3^2 + z_3^4 \\
  \xi_1 x_2 x_3 &= c \zeta_1 z_2 z_3
\end{align*}
where $(\xi_1 : \eta_1 : \zeta_1) \in E_1$ is the point we take the fiber over.
If $\xi_1 = 0$ or $\zeta_1 = 0$, then the fiber splits into two of the curves
described in Section~\ref{SS:2const} (taken twice). Otherwise, the third equation
is equivalent to $(x_3 : z_3) = (c \zeta_1 z_2 : \xi_1 x_2)$. Using this
in the second equation, we can reduce to the pair
\begin{align*}
  y_2^2 &= x_2^4 + a_2 x_2^2 z_2^2 + z_2^4 \\
  y_3^2 &= \xi_1^4 x_2^4 + a_3 c^2 \xi_1^2 \zeta_1^2 x_2^2 z_2^2 + c^4 \zeta_1^4 z_2^4
\end{align*}
of equations describing the fiber as a bidouble cover of~$\P^1$.

The fiber degenerates if and only if the two quartic forms in $(x_2, z_2)$
have common roots. This occurs exactly when their resultant
\[ R = \left(c^4 \xi_1^4 \zeta_1^4 (a_2^2 + a_3^2)
              - c^2 \xi_1^2 \zeta_1^2 (\xi^4 + c^4 \zeta_1^4) a_2 a_3
              + (\xi_1^4 - c^4 \zeta_1^4)^2\right)^2
\]
vanishes. Writing $\xi = \xi_1/(c \zeta_1)$, this is equivalent to
\[ \xi^8 - a_2 a_3 \xi^6 + (a_2^2 + a_3^2 - 2) \xi^4 - a_2 a_3 \xi^2 + 1 = 0 \,.\]
So generically (in terms of $a_2$ and~$a_3$) this occurs for eight values of~$\xi$,
and then the two quartic forms have one pair of common roots, leading to
a fiber that is a curve of geometric genus~$3$ with two nodes. But it is also
possible that both quartics are proportional. This happens if and only if
$a_2 = \pm a_3$ (and $\xi = \pm 1$ if $a_2 = a_3$, $\xi = \pm i$ if $a_2 = -a_3$).
In that case, the fiber splits into two curves isomorphic to $E_2$ and~$E_3$
that meet transversally in the four points where $y_2 = y_3 = 0$.
The two values of~$\xi$ give rise to four fibers (note that the point
$(\xi_1 : \eta_1 : \zeta_1)$ on~$E_1$ cannot have $\eta_1 = 0$, since then
$X$ would contain a fixed point of~$\gamma$), together containing eight copies
of $E_2$ (or~$E_3$). The action of~$\Gamma$ permutes these eight curves transitively.
The stabilizer of a fiber swaps its two components and interchanges the four
intersection points in pairs, so that the image of either component on~$S$
is a curve of geometric genus~$1$ with two nodes. Note that the existence
of these curves does not depend on the value of~$c$ (as long as $X$ is smooth).

If both $a_2 = a_3$ and $a_2 = -a_3$, so that $a_2 = a_3 = 0$ (which means that
the curves have complex multiplication by~$\Z[i]$), then we obtain two orbits
of this kind, leading to two curves of geometric genus~$1$ on~$S$.
So in the extreme case $a_1 = a_2 = a_3 = 0$, we obtain the maximal number
of six such curves on~$S$. This situation is shown in Figure~\ref{Fig12}
in terms of the images of the curves in~$\P^2$ with respect to Burniat's construction.
In general, the table below gives the number
of these curves (the subscripts are understood up to cyclic permutation).
\[ \begin{array}{|c|c|} \hline
     \text{condition} & \text{number of curves} \\\hline
     a_1^2 \neq a_2^2 \neq a_3^2 \neq a_1^2 & 0 \text{\large\strut}\\
     0 \neq a_1^2 = a_2^2 \neq a_3^2        & 1 \\
     0 = a_1 = a_2 \neq a_3                 & 2 \\
     0 \neq a_1^2 = a_2^2 = a_3^2           & 3 \\
     0 = a_1 = a_2 = a_3                    & 6 \\\hline
   \end{array}
\]

Note that the fibrations of~$X$ given by the projections to the~$E_j$ are
stable under~$\Gamma$. They induce fibrations $S \to \P^1$ with fibers of
genus~$3$. In Burniat's construction, these fibrations are obtained by
pulling back the pencils of lines through one of the three points on~$\P^2$
that are blown up. Using this, it is easy to see that the fibers in~$X$ over the
ramification points of $E_j \to \P^1$ map to smooth curves of genus~$2$
on~$S$ (taken twice).

\begin{figure}[htb]
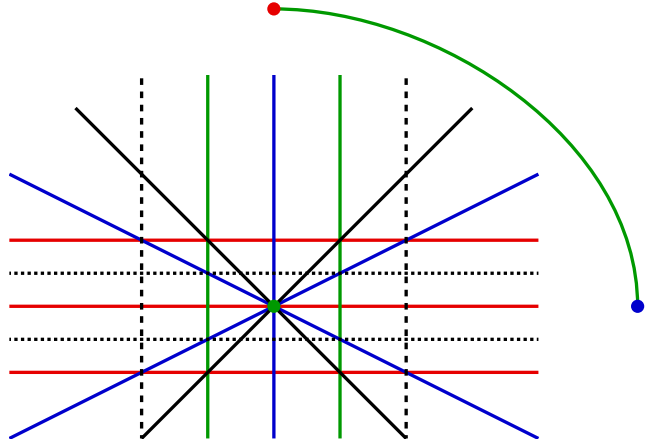

  \hrulefill
  \begin{center}
    \Gr{twelve}{0.6\textwidth}
  \end{center}
  \caption{Six curves of type~I. They are the preimages in~$S$
           of the six black lines. Here two of the blown-up points are at infinity.}
  \label{Fig12}
  \hrulefill
\end{figure}

\begin{Definition}
  We call the curves of geometric genus~$1$ on~$S$ arising in this way
  \emph{curves of type~I}.
\end{Definition}

See the left part of Figure~\ref{Figtypes} for how a curve of type~I
sits inside~$S$.

We write $N_1$ for the subset of~$M$ given by $d = 0$ (so that two of the~$a_j$
agree up to sign, compare Theorem~\ref{modulispace}), which corresponds
to surfaces admitting curves of type~I.

\subsection{No $\psi_j$ is constant} \label{SS:0const}

With a slight abuse of notation we write $x_j$ for the rational function~$x_j/z_j$
on~$E_j$ (and~$X$). If $E \subset X$ is an elliptic curve and all~$\psi_j$
are non-constant, then each~$\psi_j$ has the form $\psi_j(P) = \varphi_j(P - Q_j)$,
where $Q_j \in E$ is some point and $\varphi_j \colon E \to E_j$ is
an isogeny. The condition that $E$ is contained in~$X$ then means that
\[ (x_1 \circ \psi_1) \cdot (x_2 \circ \psi_2) \cdot (x_3 \circ \psi_3) = c \,. \]
In particular, we must have
\begin{align*}
  \tau_{Q_1} \varphi_1^*\bigl(\div(x_1)\bigr)
    &+ \tau_{Q_2} \varphi_2^*\bigl(\div(x_2)\bigr)
     + \tau_{Q_3} \varphi_3^*\bigl(\div(x_3)\bigr) \\
  &= \div(x_1 \circ \psi_1) + \div(x_2 \circ \psi_2) + \div(x_3 \circ \psi_3)
   = 0 \,,
\end{align*}
where $\tau_Q$ denotes translation by~$Q$.

It can be checked that the four points above $0$ and~$\infty$ on~$E_j$
form a principal homogeneous space for the $2$-torsion subgroup~$E_j[2]$.
So, taking the coefficients mod~$2$, $\div(x_j) \equiv [2]^*(P_j)$ for some
point $P_j \in E_j$, where $[2]$ denotes the multiplication-by-$2$ map.
The relation above then implies that
\[ \tau_{Q'_1} [2]^* K_1 + \tau_{Q'_2} [2]^* K_2 + \tau_{Q'_3} [2]^* K_3 \equiv 0 \bmod 2 \]
with suitable points $Q'_j \in E$, where $K_j$ denotes the formal sum of the points
in the kernel of~$\varphi_j$. This in turn is equivalent to
\[ K_1 + \tau_{2Q'_2-2Q'_1} K_2 + \tau_{2Q'_3-2Q'_1} K_3 \equiv 0 \bmod 2 \,. \]
In down-to-earth terms, this means that there are cosets of $\ker(\varphi_1)$
(which we can take to be $\ker(\varphi_1)$ itself), $\ker(\varphi_2)$
and~$\ker(\varphi_3)$ such that every point of~$E$
is contained in either none or exactly two of these cosets.
Since $0 \in \ker(\varphi_1)$, exactly one of the two other cosets must contain~$0$;
we can assume that this is~$\ker(\varphi_2)$. Then the condition is that the symmetric
difference of $\ker(\varphi_1)$ and~$\ker(\varphi_2)$ must be a coset
of~$\ker(\varphi_3)$.

Since $E \subset X \subset E_1 \times E_2 \times E_3$, the intersection
of all three kernels is trivial.
Now assume that $G = \ker(\varphi_1) \cap \ker(\varphi_2) \neq \{0\}$.
Then the symmetric difference of $\ker(\varphi_1)$ and~$\ker(\varphi_2)$
is a union of cosets of~$G$, which implies that $G \subset \ker(\varphi_3)$,
a contradiction. So $\ker(\varphi_1) \cap \ker(\varphi_2) = \{0\}$
(and the same holds for the other two intersections)
and hence the symmetric difference is
$\ker(\varphi_1) \cup \ker(\varphi_2) \setminus \{0\}$.
Now if $\#\ker(\varphi_1) > 2$, then this symmetric difference contains
two elements of~$\ker(\varphi_1)$ whose difference then must be
in~$\ker(\varphi_1) \cap \ker(\varphi_3)$, which is impossible.
So $\#\ker(\varphi_1) \le 2$ and similarly $\#\ker(\varphi_2) \le 2$.
On the other hand, the kernels cannot both be trivial (then the symmetric
difference would be empty and could not be a coset). So, up to cyclic permutation
of the subscripts, this leaves two possible scenarios:
\begin{enumerate}[(1)]
  \item $\#\ker(\varphi_1) = 2$ and $\ker(\varphi_2)$ and~$\ker(\varphi_3)$ are trivial;
  \item $\ker(\varphi_1)$, $\ker(\varphi_2)$ and $\ker(\varphi_3)$ are the three
        subgroups of order~$2$ of~$E$.
\end{enumerate}
We now have to check whether and how these possibilities can indeed be realized.

\begin{figure}[htb]
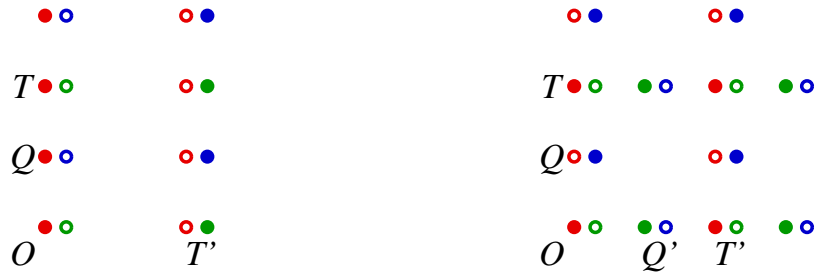

  \hrulefill
  \begin{center}
    \Gr{kernels}{0.8\textwidth}
  \end{center}
  \caption{The possible arrangements of the divisors of $x_1 \circ \psi_1$ (red),
           $x_2 \circ \psi_1$ (green) and $x_3 \circ \psi_3$ (blue).
           On the left is Scenario~(1), on the right Scenario~(2).
           $\bullet$ denotes a zero, $\circ$ a pole of $x_j \circ \psi_j$.}
  \label{Figker}
  \hrulefill
\end{figure}

In Scenario~(1), $E$, $E_2$ and~$E_3$ are isomorphic.
We identify $E_2$ and~$E_3$ with~$E$ via the isomorphisms $\psi_2$ and~$\psi_3$.
The points above $0$ and~$\infty$ on $E_2$ and~$E_3$ then have to be distinct,
and the two corresponding cosets of~$E[2]$ must differ by a point~$Q$ of order~$4$
such that $\ker(\varphi_1) = \{0, 2Q\}$. In $\div(x_1 \circ \psi_1)$,
points differing by~$2Q$ then have the same sign, which implies that
$T_2 = T_3 = 2Q \mathrel{=:} T$ (recall that $T_j$ is the point of order~$2$ on~$E_j$
such that adding~$T_j$ changes the sign of~$x_j$). Let $T' \in E$ be a point
of order~$2$ with $T' \neq T$. Then we can realize this scenario by taking
\[ \psi_1 \colon E \to E_1 \colonequals E/\langle T \rangle, \quad
   \psi_2 = \id_E \colon E \to E_2 \colonequals E, \quad
   \psi_3 = \id_E \colon E \to E_3 \colonequals E
\]
and choosing the~$x_j$ such that
\begin{align*}
   \div(x_1 \circ \psi_1) &= (O) + (Q) + (T) + (-Q) - (T') - (T'+Q) - (T'+T) - (T'-Q)\,, \\
   \div(x_2 \circ \psi_2) &= (T') + (T+T') - (O) - (T)\,, \\
   \div(x_3 \circ \psi_3) &= (T'+Q) + (T'-Q) - (Q) - (-Q)\,.
\end{align*}
See the left part of Figure~\ref{Figker}.

More concretely, let
\[ E \colon y^2 = x(x-1)(x-\lambda^2) \,, \]
which has a point $Q = (\xi,\eta) = (\lambda, i \lambda(1 - \lambda ))$ of
order~$4$ such that $T = 2Q = (0,0)$. Then $E_1 = E/\langle T \rangle$ is
\[ E_1 \colon y^2 = (x - 2\lambda)(x + 2\lambda)(x - (1+\lambda^2)) \,; \]
with
\[ x_1 = \frac{(1+\lambda)y}{(x + 2\lambda)(x - (1+\lambda^2))} \quad\text{and}\quad
   y_1 = \frac{x^2 - 4\lambda x + 4\lambda(1-\lambda+\lambda^2)}{(x+2\lambda)(x-(1+\lambda^2))}
\]
this gives the equation
\[ y_1^2 = x_1^4 + 2\frac{1- 6\lambda + \lambda^2}{(1+\lambda)^2} x_1^2 + 1 \]
for~$E_1$. On $E_2 = E = E_3$, we take
\[ x_2 = \frac{1}{\sqrt{1-\lambda^2}} \, \frac{y}{x} \qquad\text{and}\qquad
   x_3 = \sqrt{-\frac{1-\lambda}{1+\lambda}} \, \frac{x+\lambda}{x-\lambda} \,.
\]
This gives the following equations for $E_2$ and~$E_3$:
\[ y_2^2 = x_2^4 + 2\frac{1+\lambda^2}{1-\lambda^2} x_2^2 + 1, \qquad
   y_3^2 = x_3^4 + 2\frac{1+\lambda^2}{1-\lambda^2} x_3^2 + 1
\]
with $y_2 = 1/(1-\lambda^2) \cdot (x - \lambda^2/x)$
and $y_3 = 4i\lambda/(1+\lambda) \cdot y/(x-\lambda)^2$.
Since the isogeny $E \to E_1$ is given by
$(x,y) \mapsto (x + \lambda^2/x, (1 - \lambda^2/x^2) y)$,
we get indeed that
\begin{align*}
  (x_1 \circ \psi_1) x_2 x_3
    &= \frac{(1+\lambda)(x-\lambda) y}{(x+\lambda)(x-1)(x-\lambda^2)}
       \cdot \frac{1}{\sqrt{1-\lambda^2}} \, \frac{y}{x}
       \cdot \sqrt{-\frac{1-\lambda}{1+\lambda}} \, \frac{x+\lambda}{x-\lambda} \\
    &= \pm i \frac{y^2}{x(x-1)(x-\lambda^2)} = \pm i
\end{align*}
is constant. (The sign depends on the choice of the two square roots.)

We can read off the intersection multiplicities with the elliptic curves
`at infinity' of~$X$ from the divisors of the functions $x_j \circ \psi_j$:
a point that occurs positively in~$\div(x_j \circ \psi_j)$ and
negatively in~$\div(x_k \circ \psi_k)$ contributes one (transversal) intersection
of~$E$ with a curve in $X \cap \{x_j = z_k = 0\}$ and therefore also contributes~1
to the intersection multiplicity of $\pi(E)$ with the image of these
curves on~$S$. In our case, we obtain intersection multiplicity~$2$ for
\[ x_1 = z_2 = 0, \quad x_1 = z_3 = 0, \quad z_1 = x_2 = 0, \quad z_1 = x_3 = 0, \]
and no intersection for $x_2 = z_3 = 0$ and for $z_2 = x_3 = 0$.

\begin{figure}[htb]
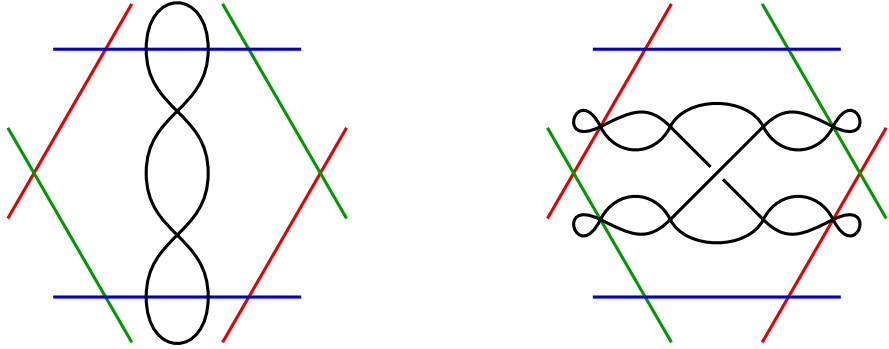

  \hrulefill
  \begin{center}
    \strut\hfill\Gr{typeI}{0.45\textwidth} \hfill \Gr{typeII}{0.45\textwidth}\hfill\strut
  \end{center}
  \caption{A curve of type~I (left) and a curve of type~II (right) sitting inside~$S$.}
  \label{Figtypes}
  \hrulefill
\end{figure}

The $\Gamma$-orbit of such a curve has size~$8$. In terms of points~$P$ on~$E$,
the action of~$\Gamma$ on~$E$ is as follows. We write $\psi = \psi_1$.
Note that the automorphisms that negate the $y$-coordinate are given,
respectively, by $P \mapsto \psi(Q) - P$, $P \mapsto T - P$, $P \mapsto -P$
on $E_1$, $E_2$, $E_3$.
\[ \renewcommand{\arraystretch}{1.1}
   \begin{array}{|r|ccc|c|} \hline
                       &             E_1 & E_2 & E_3 & \text{intersection} \\\hline
                   \id &         \psi(P) &   P &   P & \text{same curve} \\
              \gamma_1 &         \psi(P) & T+P & T-P & P = O, T, T', T+T' \\
              \gamma_2 &        -\psi(P) &   P & T+P & \text{none} \\
              \gamma_3 & \psi(Q)+\psi(P) &  -P &   P & \text{none} \\
     \gamma_2 \gamma_3 & \psi(Q)-\psi(P) &  -P & T+P & \text{none} \\
     \gamma_1 \gamma_3 & \psi(Q)+\psi(P) & T-P & T-P & 2P = \pm Q \text{\ (8 points)} \\
     \gamma_1 \gamma_2 &        -\psi(P) & T+P &  -P & P = Q, -Q, T'+Q, T'-Q \\
                \gamma & \psi(Q)-\psi(P) & T-P &  -P & \text{none} \\\hline
   \end{array}
\]

In the column labeled `intersection', we list the points $P \in E$
that lie on the corresponding translate of~$E$.
The entries show that
each curve in the orbit intersects two other curves in the orbit in
four points each, which are on some of the curves at infinity, and intersects
another curve in eight points. These intersection points are swapped in pairs
by the element of~$\Gamma$
that interchanges the two intersecting curves. So the image of this orbit in~$S$
is a curve of geometric genus~$1$ with eight nodes (and hence arithmetic genus~$9$).
Its image on~$\P^2$ is a conic passing through exactly two of the blown-up points,
which at these points is tangent to one of the lines in the branch locus,
intersects the two lines joining the two points with the third at an intersection
point with another branch line and is tangent to two further branch lines;
see Figure~\ref{FigSc1}, Left. It can be checked that there can be at most one such
conic through a given pair of the points unless the two corresponding~$E_j$
have $a_j = 0$. In this case, there are either none or two such conics,
see Figure~\ref{FigSc1}, Right. (When $a_2 = a_3 = 0$, say, then
$(x_1, y_1, x_2, y_2, x_3, y_3) \mapsto (-x_1, y_1, ix_2, y_2, ix_3, y_3)$
is an additional automorphism of~$X$, which descends to an automorphism of~$S$
that swaps the two curves.)

If such a curve is to exist on~$S$, two of the $a_j$ must agree up to sign,
and the third must satisfy a relation with the first two, which is (for $a_2 = a_3$ as above)
\[ \bigl(a_1 - 2 (a_2^2 - 3)\bigr)^2 =  4 a_2^2 (a_2^2 - 4) \,. \]
In addition, we must have $c^2 = -1$ if $a_2 = a_3 \neq 0$, and $c^4 = 1$ otherwise.
Of course, we can replace $a_1$ by~$-a_1$; then $c^2 = 1$ if $a_2 = a_3 \neq 0$.
This translates into $(u_1, u_2, u_3, w)$ being on a curve (and $d = 0$, $v = 4$).

The locus in~$M$ of points whose associated surfaces contain curves of type~II
is a smooth rational curve of degree~$6$. One possible parameterization is
\begin{align*}
  u_1 &= -2 \frac{t^3 - 4 t^2 - 7 t - 8}{t^2} \\
  u_2 &= \frac{(t - 1)^2 (t^3 - 10 t^2 - 31 t - 32)}{t^3} \\
  u_3 &= 4 \frac{(t - 1)^4 (t + 2)^2}{t^4} \\
    d &= 0 \\
    v &= 4 \\
    w &= -4 \frac{(t - 1)^2 (t + 2)}{t^2}
\end{align*}
We write $N_2$ for this rational curve in~$M$.

There are four values of~$\lambda$ such that all~$a_j$ are equal up to sign;
this leads to $a_j = \pm\alpha$ with $\alpha = \frac{3}{8}(5 \pm \sqrt{-7})$ and
\[ (u_1,u_2,u_3,d,v,w) = (3\alpha^2, 3\alpha^4, \alpha^6, 0, 4, 2\alpha^3)\,. \]
In this case, $E$ has complex multiplication by the order of discriminant~$-7$,
and the isogeny $E \to E_1$ is multiplication by $(1 \pm \sqrt{-7})/2$.
So in this special situation, we can have three curves of this type.

\begin{figure}[htb]
  \hrulefill
  \begin{center}
    \Gr{scenario1}{0.4\textwidth} \hfill \Gr{sc1-double}{0.4\textwidth}
  \end{center}
  \caption{\textbf{Left:} A curve of type~II. It is the preimage in~$S$
           of the black conic. The dashed line leads to a curve of type~I. \newline
           \textbf{Right:} When two of the curves have $a_j = 0$, then we can obtain two curves
           of type~II (given by the solid and the dashed black conics)
           and two curves of type~I (given by the two dotted lines).}
  \label{FigSc1}
  \hrulefill
\end{figure}

\begin{Definition}
  We call the curves of geometric genus~$1$ on~$S$ arising as in Scenario~(1)
  \emph{curves of type~II}.
\end{Definition}

See the right part of Figure~\ref{Figtypes} for how a curve of type~II
sits inside~$S$.

For scenario~(2), we consider $E$ with two points $Q$, $Q'$ of order~$4$
that span~$E[4]$. We set $T = 2Q$, $T' = 2Q'$
and write $D = (O) + (T) + (T') + (T+T')$ for the
divisor that is the sum of the $2$-torsion points on~$E$.
Then essentially the only way to realize this scenario is via
\begin{align*}
 \psi_1 &\colon E \To E_1 \colonequals E/\langle T \rangle
   &\text{with}\quad \div(x_1 \circ \psi_1) &= D - \tau_{Q}(D) \,, \\
 \psi_2 &\colon E \To E_2 \colonequals E/\langle T' \rangle
   &\text{with}\quad \div(x_2 \circ \psi_2) &= \tau_{Q'}(D) - D \,, \\
 \psi_3 &\colon E \To E_3 \colonequals E/\langle T+T' \rangle
   &\text{with}\quad \div(x_3 \circ \psi_3) &= \tau_{Q}(D) - \tau_{Q'}(D)\,.
\end{align*}
See the right part of Figure~\ref{Figker}.

We are now going to show that this scenario is impossible.
For this, note that the map negating the $y$-coordinate is given
on $E_1$, $E_2$ and~$E_3$ by
$P \mapsto \psi_1(T') - P$, $P \mapsto \psi_2(T) - P$ and $P \mapsto -P$,
respectively. Consider $P = Q + Q' \in E$. We have
\begin{align*}
  P &= \bigl(\psi_1(Q+Q'), \psi_2(Q+Q'), \psi_3(Q+Q')\bigr) \\
    &= \bigl(\psi_1(-Q+Q'), \psi_2(Q-Q'), \psi_3(-Q-Q')\bigr) \\
    &= \bigl(\psi_1(T')-\psi_1(P), \psi_2(T)-\psi_2(P), -\psi_3(P)\bigr) \\
    &= \gamma(P) \,,
\end{align*}
so that $\gamma$ would have $P$ as a fixed
point on~$X$, which contradicts our assumptions. So this scenario
is indeed impossible.

\begin{table}[htb]
  \hrulefill

  \begin{minipage}{0.52\textwidth}
  \[ \renewcommand{\arraystretch}{1.1}
     \begin{array}{|c|c|c|c|c|c|} \hline
       \text{\large\strut}
        \in & \notin        & \infty & \,\text{I}\, & \text{II} & \text{dim.} \\\hline
       \text{\large\strut}
        M   & N_1           &      6 &            0 &         0 & 4 \\
        N_1 & M_2, M_4, N_2 &      6 &            1 &         0 & 3 \\
        M_2 & M_4, N_2      &      6 &            2 &         0 & 2 \\
        M_4 & M_2, N_2      &      6 &            3 &         0 & 2 \\
        M_2 \cap M_4 &      &      6 &            6 &         0 & 1 \\
        N_2 & M_2, M_4      &      6 &            1 &         1 & 1 \\
        M_2 \cap N_2 &      &      6 &            2 &         2 & 0 \\
        M_4 \cap N_2 &      &      6 &            3 &         3 & 0 \\\hline
     \end{array}
  \]
  \end{minipage}
  \hfill
  \begin{minipage}{0.45\textwidth}
    \[ \xymatrix@R=5mm@C=8mm%
                {& M \ar@{-}[d] \\
                 & N_1 \ar@{-}[dr] \ar@{-}[d] \ar@{-}[ddl] \\
                 & M_2 \ar@{-}[ddl] \ar@{-}[d] & M_4 \ar@{-}[dl] \ar@{-}[ddl] \\
                 N_2 \ar@{-}[d] \ar@{-}[dr] & M_2 \cap M_4 & \\
                 M_2 \cap N_2 & M_4 \cap N_2
                }
    \]
  \end{minipage}
  \vspace{3mm}
  \caption{Curves of geometric genus~$1$ on~$S$, see Theorem~\ref{T:lowgenus}.
           We use the notations $M_j$, $N_j$ as introduced in
           Sections~\ref{S:moduli} and~\ref{S:low}. Each line applies to surfaces
           whose moduli point is in the subset of~$M$ given in the `$\in$' column,
           but not in any of the subsets given in the `$\notin$' column. \newline
           $M_2 \cap N_2$ is the single point $(u_1,u_2,u_3,d,v,w) = (36,0,0,0,4,0)$
           represented by $(a_1,a_2,a_3,c) = (0,0,6,1)$, whereas $M_4 \cap N_2$
           consists of the two points $(3\alpha^2,3\alpha^4,\alpha^6,0,4,2\alpha^3)$,
           where $4\alpha^2 - 15\alpha + 18 = 0$, represented by $(-\alpha,-\alpha,-\alpha,i)$.}
  \label{Tb:lowg}
  \vspace*{-5mm}\hrulefill
\end{table}

We summarize our findings.

\begin{Theorem} \label{T:lowgenus}
  Let $S$ be a primary Burniat surface. Then $S$ does not contain rational curves.
  Depending on the location of the moduli point of~$S$ in~$M$,
  $S$ contains the numbers of curves of geometric genus~1
  at infinity, of type~I and of type~II as given in Table~\ref{Tb:lowg}.
  The column labeled `dim.'
  gives the dimension of the corresponding subset of the moduli space.

  In particular, the number of curves of geometric genus~1 on a primary
  Burniat surface can be 6, 7, 8, 9, 10 or~12.

  In each case, $\Aut(S)$ acts transitively on the set of curves of type~I
  and on the set of curves of type~II.
\end{Theorem}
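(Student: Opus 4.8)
The plan is to obtain almost the whole statement by assembling the case analysis of Sections~\ref{SS:2const}, \ref{SS:1const} and~\ref{SS:0const}, and to deduce the transitivity of the $\Aut(S)$-action from the explicit determination of $\Aut(S)$ in Section~\ref{S:auto}. Recall first that $X$ contains no rational curve, being a closed subvariety of a product of curves of positive genus; since $\pi\colon X \to S$ is \'etale, a rational curve on~$S$ would (via its normalisation) pull back to rational curves in~$X$, so $S$ contains no rational curve either. Moreover, as observed at the start of Section~\ref{S:low}, every curve of geometric genus~$1$ on~$S$ is the image under~$\pi$ of a smooth genus-$1$ curve $E \subset X$, and two such curves lying in the same $\Gamma$-orbit give the same curve on~$S$; so it suffices to classify the smooth genus-$1$ curves in~$X$ up to the $\Gamma$-action. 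Each of the three projections $E \to E_j$ is constant or a translate of an isogeny, and at least one of them is non-constant, so the trichotomy of Sections~\ref{SS:2const}--\ref{SS:0const} is exhaustive: two constant projections yield the six curves at infinity, always present; exactly one constant projection yields the curves of type~I, whose number is given in Section~\ref{SS:1const}; and no constant projection yields the curves of type~II of Scenario~(1), Scenario~(2) being impossible, whose number is given in Section~\ref{SS:0const}.

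Next I would translate the conditions on $(a_1,a_2,a_3,c)$ that govern the numbers of curves of type~I and~II into the stratification of~$M$ in the coordinates $(u_1,u_2,u_3,d,v,w)$ of Theorem~\ref{modulispace}. The hypersurface $d = 0$ is exactly the locus~$N_1$ where two of the~$a_j^2$ coincide, i.e.\ where curves of type~I appear; among its substrata, $M_2 = \{u_2 = u_3 = 0\}$ corresponds (after a cyclic permutation) to $a_1 = a_2 = 0$, $M_4 = \{d = 0,\ u_1^2 = 3 u_2,\ u_1 u_2 = 9 u_3\}$ to $a_1^2 = a_2^2 = a_3^2$, and the rational curve $N_2 \subset \{d = 0,\ v = 4\}$ to the relation that must hold for a curve of type~II to exist. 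These are precisely the loci where extra curves of type~I or~II occur, with the multiplicities tabulated in Sections~\ref{SS:1const} and~\ref{SS:0const}. Intersecting these strata and combining the per-stratum counts produces Table~\ref{Tb:lowg}, the dimension column following by counting defining equations ($d = 0$ being one condition, $M_2$ and $M_4$ each of codimension~$2$, $N_2$ a curve, and $M_2 \cap N_2$, $M_4 \cap N_2$ the indicated finite point sets). Adding the three numerical columns of the table row by row then gives exactly the totals $6, 7, 8, 9, 10, 12$; in particular $11$ does not occur.

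It remains to prove that $\Aut(S)$ acts transitively on the type-I curves and on the type-II curves, which I would check stratum by stratum using the extra automorphisms $\tau',\rho',\tau'',\sigma'$ beyond $\bar{\Gamma}_0 \cong (\Z/2\Z)^2$ found in Section~\ref{S:auto}. On a stratum carrying only one curve of a given type there is nothing to prove. In general the type-I curves (and likewise the type-II curves) split into at most three families, according to which of the three factors~$E_j$ plays the distinguished role in their construction (the target of the fibre projection for type~I, the isogeny factor for type~II); each family contains one or two curves, with two curves exactly when a certain pair among the~$a_\ell$ vanishes, equivalently when two of the~$E_\ell$ have complex multiplication by~$\Z[i]$. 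When all three~$a_j^2$ agree we lie on~$M_4$ or a smaller stratum, where $\Aut(S)$ contains the order-$3$ automorphism~$\sigma'$, which cyclically permutes the~$E_j$ and hence the three families; when two (or three) of the~$a_j$ vanish we lie on~$M_2$ or a smaller stratum, where $\Aut(S)$ contains the order-$4$ automorphism~$\rho'$, realised on~$X$ by the map $(x_1,y_1,x_2,y_2,x_3,y_3)\mapsto(-x_1,y_1,ix_2,y_2,ix_3,y_3)$ from Section~\ref{SS:0const}, which interchanges the two curves within each such family. Combining these (together with~$\bar{\Gamma}_0$) on the deepest strata --- for instance $M_2 \cap M_4$, with its six type-I curves, or $M_4 \cap N_2$, with its three type-II curves --- yields transitivity in every case. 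The step I expect to be the main obstacle is precisely this last bookkeeping: one must check that each of the extra automorphisms acts on the explicit families of genus-$1$ curves of Sections~\ref{SS:1const} and~\ref{SS:0const} in the asserted way --- which reduces to tracking the action of the corresponding elements of~$\tilde{\Gamma}$ on those families --- and that on a stratum such as $M_2 \cap M_4$ the group generated by $\sigma'$, $\rho'$ and~$\bar{\Gamma}_0$ really is transitive on all six type-I curves, rather than preserving a partition into two triples or three pairs.
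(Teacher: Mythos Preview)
Your proposal is correct and follows essentially the same approach as the paper: the theorem is a summary of the case analysis carried out in Sections~\ref{SS:2const}--\ref{SS:0const}, together with the identification of the automorphisms from Section~\ref{S:auto} that permute the curves of each type. One small correction: the explicit automorphism $(x_1,y_1,x_2,y_2,x_3,y_3)\mapsto(-x_1,y_1,ix_2,y_2,ix_3,y_3)$ that swaps the two curves when two of the $a_j$ vanish is stated in Section~\ref{SS:1const}, not Section~\ref{SS:0const}; and note that it is written there for the labelling $a_2=a_3=0$, which is a cyclic permutation of the representative $a_1=a_2=0$ used to define~$M_2$ in Section~\ref{S:auto}, so a conjugation by~$\sigma$ is implicit when you match it with~$\rho'$. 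Your observation that on $M_2\cap M_4$ it suffices that $\sigma'$ cyclically permutes the three pairs and some $\rho'$ swaps one pair (its $\sigma'$-conjugates then swap the others) is exactly the right way to finish the transitivity bookkeeping.
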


\begin{Definition}
  For our purposes, we will say that a primary Burniat surface~$S$ is \emph{generic}, if
  $S$ does not contain curves of type~I or~II.
\end{Definition}

By Theorem~\ref{T:lowgenus}, $S$ is generic if and only if no two of the~$E_j$
are isomorphic as double covers of~$\P^1$, which means that $d \neq 0$.

\begin{figure}[htb]
  \hrulefill
  \[ \xymatrix@C=22mm@R=15mm@!0%
              { 4 & & & M \ar@{-}[lldd] \ar@{-}[rrd] \ar@{-}[d] \\
                3 & & & N_1 \ar@{-}[rrdd] \ar@{-}[d] \ar@{-}[rrrd] \ar@{-}[rrd] \ar@{-}[lldd]
                  & & M_1 \ar@{-}[lldd] \ar@{-}[d]
                  & \\
                2 & M_3 \ar@{-}[d] \ar@{-}[rd]
                  & & M_2 \ar@{-}[ld] \ar@{-}[d] \ar@{-}[rd]
                  & & M_1 \cap N_1 \ar@{-}[d] \ar@{-}[lld] \ar@{-}[rd]
                  & M_4 \ar@{-}[lld] \ar@{-}[d] \\
                1 & (M_3 \cap N_1)'
                  & M_2 \cap M_3
                  & M_1 \cap M_2 \ar@{-}[rd] \ar@{-}[rrd]
                  & M_2 \cap M_4 \ar@{-}[rd]
                  & N_2 \ar@{-}[rd] \ar@{-}[ld]
                  & M_1 \cap M_4 \ar@{-}[ld] \ar@{-}[d] \\
                0 & & & & M_2 \cap N_2
                  & M_1 \cap M_2 \cap M_4
                  & M_4 \cap N_2
              }
  \]
  \caption{The combined stratification. Note that $M_3 \cap N_1$ is the disjoint union
           of $M_2 \cap M_3$ and $(M_3 \cap N_1)'$. The latter is represented by
           $a_1 = 0$, $a_2 = a_3$, $c^4 + 1 = 0$.}
  \hrulefill
\end{figure}
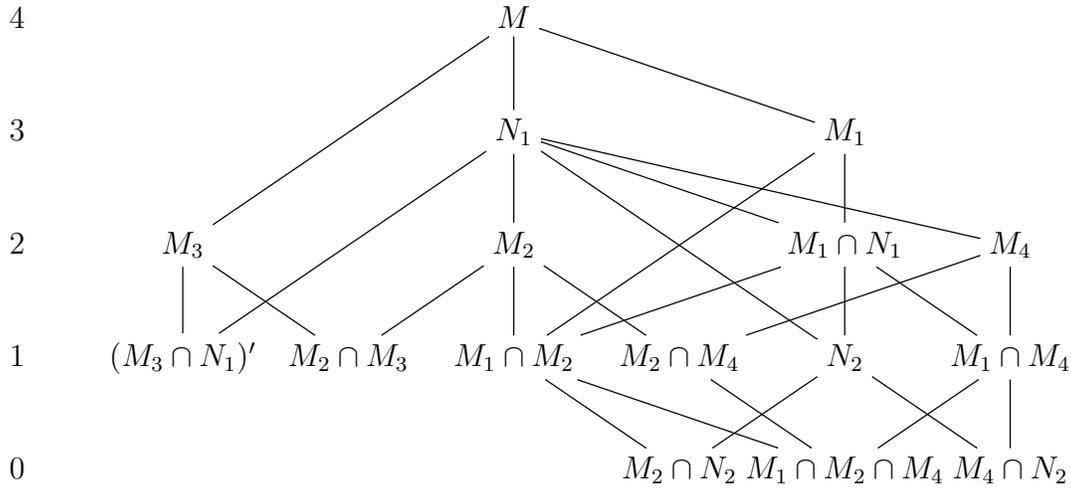


\section{Rational points on primary Burniat surfaces} \label{S:ratpts} \label{S:explicit}

In this section we will assume that everything is defined over~$\Q$.
Concretely, this
means that the curves $E_j$ are curves of genus~$1$ over~$\Q$,
the torsion points~$T_j$ are rational points on the Jacobian elliptic
curves of the~$E_j$, the functions~$x_j$ are defined over~$\Q$, and $c \in \Q$.
Then $X$ and~$S$ are also defined over~$\Q$, and it makes sense
to study the set~$S(\Q)$ of rational points on~$S$.

We recall that $S$ contains six genus~$1$ curves at infinity;
we will soon see that these curves
always have rational points, so they are elliptic curves over~$\Q$.
If some of these curves have positive Mordell-Weil rank, then
$S(\Q)$ is infinite. The same is true when $S$ contains curves
of type~I or~II that are defined over~$\Q$, have a rational point
and positive Mordell-Weil rank.

\begin{Definition}
  Let $S$ be a primary Burniat surface. We denote by $S'$ the complement
  in~$S$ of the union of the curves of geometric genus~$1$ on~$S$.
  Note that $S'(\Q)$ is the set of sporadic rational points on~$S$.
\end{Definition}

\begin{Theorem} \label{T:finite}
  If $S$ a primary Burniat surface, then the set $S'(\Q)$ of sporadic
  rational points on~$S$ is finite.
\end{Theorem}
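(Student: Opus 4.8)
The plan is to deduce the statement directly from Theorem~\ref{T:general} together with the classification of low-genus curves in Theorem~\ref{T:lowgenus}; essentially all of the work is already contained in those two results, so the proof is a matter of fitting them together correctly.

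First I would invoke the fact (recalled above) that a primary Burniat surface~$S$ is a classical Inoue type variety, and that under our standing assumption it, together with its $\Gamma$-action, is defined over~$\Q$. By the Remark following the definition of classical Inoue type varieties, $S$ then satisfies the hypotheses of Theorem~\ref{T:general}, and part~(3) of that theorem tells us that $S$ satisfies Conjecture~\ref{Conj:SBomLa2}. In particular the exceptional set~$N$ of~$S$ is a proper Zariski closed subvariety, and $S(\Q)\setminus N(\Q)$ is finite. The whole proof then reduces to identifying~$N$ with the union of the curves of geometric genus~$\le 1$ on~$S$, i.e., with $S\setminus S'$; granting this, $S'(\Q)=S(\Q)\setminus N(\Q)$ is finite, which is exactly the assertion.

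To carry out the identification, recall that $N$ is by definition the union of the images of all non-constant morphisms $f\colon A\to S$ from an \Abelian\ variety over~$\bar\Q$. Since $S$ is of general type it cannot be dominated by an \Abelian\ variety, so each such image is a curve~$C$, whose normalization receives a non-constant morphism from~$A$ by the universal property of normalization; as no non-constant morphism from an \Abelian\ variety to a smooth projective curve of genus~$\ge 2$ exists, $C$ has geometric genus~$\le 1$. Conversely, if $C\subset S$ is any curve of geometric genus~$\le 1$, then by Theorem~\ref{T:lowgenus} it is not rational, so its geometric genus is exactly~$1$; over~$\bar\Q$ its normalization is therefore an elliptic curve~$E$, and $E\to C\hookrightarrow S$ realizes $C$ as the image of a non-constant morphism from an \Abelian\ variety, whence $C\subset N$. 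Thus $N$ is precisely the union of the finitely many (again by Theorem~\ref{T:lowgenus}) curves of geometric genus~$\le 1$ on~$S$, that is, the complement of~$S'$, as required.

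I do not expect a substantial obstacle: the deep inputs --- Faltings' and Kawamata's theorems, packaged into Theorem~\ref{T:general}, and the geometric classification of Theorem~\ref{T:lowgenus} --- are already in hand. The only points needing a little care are that the a~priori infinite union defining~$N$ is in fact Zariski closed (this is exactly what Kawamata's Theorem~\ref{T:Kawamata} supplies, via Theorem~\ref{T:general}(3)), that $N$ descends to~$\Q$ so that $N(\Q)=N\cap S(\Q)$ (clear from the Galois-stable nature of its definition), and that no curve of geometric genus~$0$ can intervene, which is precisely where the ``$S$ contains no rational curves'' part of Theorem~\ref{T:lowgenus} is used. One could instead argue more concretely by running the descent of Remark~\ref{R:twists} on the \'etale cover $X\subset E_1\times E_2\times E_3$ and its finitely many relevant twists and applying Faltings and Kawamata on each of them, but this essentially re-proves Theorem~\ref{T:general}(3); it is this more concrete viewpoint, refined further, that the subsequent sections develop in order to make $S(\Q)$ effectively computable rather than merely finite outside the low-genus curves.
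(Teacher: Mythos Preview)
Your proposal is correct and follows essentially the same approach as the paper: the paper's proof is simply ``This is a special case of Theorem~\ref{T:general}. We use the fact that the only low-genus curves on~$S$ are the curves of geometric genus~$1$ that we have excluded from~$S'$.'' You have merely unpacked this one-line argument by spelling out why the exceptional set~$N$ of Theorem~\ref{T:general}(3) coincides with the complement of~$S'$, which is exactly the content the paper leaves implicit.
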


\begin{proof}
  This is a special case of Theorem~\ref{T:general}. We use the fact
  that the only low-genus curves on~$S$ are the curves of geometric genus~$1$
  that we have excluded from~$S'$.
\end{proof}

In the remainder of this paper, we will be concerned with computing
the set of sporadic rational points explicitly.

We now discuss how to modify the description given in Section~\ref{S:moduli}
when we want to work over~$\Q$ instead of over~$\C$. We will restrict
to the case that the three genus~$1$ curves $E_1$, $E_2$, $E_3$ whose
product contains~$X$ are individually defined over~$\Q$.
Then the two commuting involutions on~$E_j$ used in the construction, one without fixed points
and one with four fixed points, are also defined over~$\Q$.
Dividing by the latter, we see that $E_j$
is (as before) a double cover of~$\BP^1$ ramified in four points, so it
can be given by an equation of the form $y^2 = f(x)$ with a polynomial~$f$
of degree (at most)~4. The function that is invariant under the second
involution and changes sign under the first must then be a function
on~$\BP^1_x$; we can choose $x$
to be that function. Then the first involution is $(x,y) \mapsto (-x,-y)$
(without the sign change on~$y$, it would have fixed points), and
the polynomial~$f$ must be even. So the curve $E_j$ has (affine) equation
\[ E_j \colon y_j^2 = r_j x_j^4 + s_j x_j^2 + t_j \]
with $r_j, s_j, t_j \in \Q$ and $(s_j^2 - 4 r_j t_j) r_j t_j \neq 0$.
Note that the parameter~$a_j$ in equations~\eqref{E:eqns} is then given
by $a_j^2 = s_j^2/r_j t_j$.
The surface $X$ is given by $x_1 x_2 x_3 = c$ inside $E_1 \times E_2 \times E_3$.
The moduli point on~$M$ is given by
\begin{gather*}
  u_1 = \frac{s_1^2}{r_1 t_1} + \frac{s_2^2}{r_2 t_2} + \frac{s_3^2}{r_3 t_3}, \;
  u_2 = \frac{s_1^2 s_2^2}{r_1 r_2 t_1 t_2} + \frac{s_2^2 s_3^2}{r_2 r_3 t_2 t_3}
         + \frac{s_3^2 s_1^2}{r_3 r_1 t_3 t_1}, \;
  u_3 = \frac{s_1^2 s_2^2 s_3^2}{r_1 r_2 r_3 t_1 t_2 t_3}, \\[2mm]
    d = \left(\frac{s_1^2}{r_1 t_1} - \frac{s_2^2}{r_2 t_2}\right)
        \left(\frac{s_2^2}{r_2 t_2} - \frac{s_3^2}{r_3 t_3}\right)
        \left(\frac{s_3^2}{r_3 t_3} - \frac{s_1^2}{r_1 t_1}\right), \;
    v = \frac{r_1 r_2 r_3}{t_1 t_2 t_3} c^4 + 2 + \frac{t_1 t_2 t_3}{r_1 r_2 r_3} c^{-4} \\[2mm]
    \quad\text{and}\quad
    w = s_1 s_2 s_3 \left(\frac{c^2}{t_1 t_2 t_3} + \frac{c^{-2}}{r_1 r_2 r_3}\right)\,.
\end{gather*}
The generators of~$\Gamma$ act as follows.
\begin{align*}
  \gamma_1 &\colon (x_1, y_1, x_2, y_2, x_3, y_3)
                \longmapsto (x_1, y_1, -x_2, -y_2, -x_3, y_3) \\
  \gamma_2 &\colon (x_1, y_1, x_2, y_2, x_3, y_3)
                \longmapsto (-x_1, y_1, x_2, y_2, -x_3, -y_3) \\
  \gamma_3 &\colon (x_1, y_1, x_2, y_2, x_3, y_3)
                \longmapsto (-x_1, -y_1, -x_2, y_2, x_3, y_3) \,.
\end{align*}

\begin{Remark} \label{R:cyclic}
  One also obtains a primary Burniat surface defined over~$\Q$, when the
  three curves~$E_j$ are defined over a cyclic cubic extension~$K$ of~$\Q$
  and form a Galois orbit (and $c \in \Q$).
  Then $\Gamma$ is a $\Q$-group scheme that splits over~$K$.
  In Burniat's construction, this means that the three points~$p_i$
  are defined over~$K$ and permuted cyclically by the action of the
  Galois group of $K$ over~$\Q$.
  We do not look into this case further, or consider the question for which
  moduli points $p \in M(\Q)$ a corresponding primary Burniat surface can
  be defined over~$\Q$.
\end{Remark}

By Remark~\ref{R:twists}, every rational point on~$S$ lifts to one
of finitely many twists $X_\xi \to S$ of $X \to S$. Our method for
determining~$S(\Q)$ will be to first find the relevant twists~$X_\xi$
and then determine the set of rational points~$X_\xi(\Q)$ on each of them.
The first step in this approach is to give an explicit description
of the twists of $X \to S$. In general, these twists are parameterized
by the Galois cohomology set $H^1(\Q, \Gamma)$; in our case, we obtain
the Galois cohomology group (it is a group, because $\Gamma$ is abelian)
\[ H^1(\Q, \Gamma) \cong H^1(\Q, \mu_2)^3
                   \cong \bigl(\Q^\times/(\Q^\times)^2\bigr)^3 \,.
\]
Its elements can be represented by triples $(d_1, d_2, d_3)$ of squarefree
integers. The corresponding twist $X_\xi$ is then given by
\begin{equation} \label{E:twist} \renewcommand{\arraystretch}{1.2}
  X_{(d_1,d_2,d_3)} \colon \left\{%
  \begin{array}{r@{{}={}}l}
    d_3 y_1^2 & r_1 d_2^2 d_3^2 x_1^4 + s_1 d_2 d_3 x_1^2 + t_1 \\
    d_1 y_2^2 & r_2 d_1^2 d_3^2 x_2^4 + s_2 d_1 d_3 x_2^2 + t_2 \\
    d_2 y_3^2 & r_3 d_1^2 d_2^2 x_3^4 + s_3 d_1 d_2 x_3^2 + t_3 \\
    d_1 d_2 d_3 x_1 x_2 x_3 & c \,.
  \end{array}\right.
\end{equation}

We can restrict the prime divisors of the~$d_j$ to the set consisting
of the prime~2 and the primes~$p$ of bad reduction, i.e., such that
the reduction of $X$ contains a fixed point of $\gamma = \gamma_1 \gamma_2 \gamma_3$.
To make this explicit,
we have to define a kind of `discriminant' for~$X$ that vanishes if
and only if $X$ passes through one of the fixed points. This will be the
case if and only if $c = x_1(P_1) x_2(P_2) x_3(P_3)$ for some $P_j \in E_j$
that are fixed points of $(x_j,y_j) \mapsto (x_j,-y_j)$, i.e., for
points $P_j = (\alpha_j, 0)$ with $r_j \alpha_j^4 + s_j \alpha_j^2 + t_j = 0$.
Equivalently, $c^2 = \beta_1 \beta_2 \beta_3$ with $\beta_j$ a root
of~$r_j X^2 + s_j X + t_j$. We therefore look at
\[   \prod_{\eps_1,\eps_2,\eps_3 = \pm 1}
          \bigl(8 r_1 r_2 r_3 c^2
                   - (-s_1 + \eps_1 \Delta_1)
                     (-s_2 + \eps_2 \Delta_2)
                     (-s_3 + \eps_3 \Delta_3)\bigr) \,,
\]
where $\Delta_j = \sqrt{s_j^2 - 4 r_j t_j}$. We can divide this by
$2^{24} r_1^4 r_2^4 r_3^4$ and obtain
\begin{align*}
  D &= r_1^4 r_2^4 r_3^4 \, c^{16} + r_1^3 r_2^3 r_3^3 s_1 s_2 s_3 \, c^{14}
          + r_1^2 r_2^2 r_3^2 (4 \sigma_1 - 2 \sigma_2 + \sigma_3) \, c^{12} \\
    & \quad{} + r_1 r_2 r_3 s_1 s_2 s_3 (-5 \sigma_1 + \sigma_2) \, c^{10}
              + (6 \sigma_1^2 - 4 \sigma_1 \sigma_2 - 2 \sigma_1 \sigma_3
                  + \sigma_1 \sigma_4 + \sigma_2^2) \, c^8 \\
    & \quad{} + s_1 s_2 s_3 t_1 t_2 t_3 (-5 \sigma_1 + \sigma_2) \, c^6
              + t_1^2 t_2^2 t_3^2 (4 \sigma_1 - 2 \sigma_2 + \sigma_3) \, c^4 \\
    & \quad{} + s_1 s_2 s_3 t_1^3 t_2^3 t_3^3 \, c^2  + t_1^4 t_2^4 t_3^4 \,.
\end{align*}
Here we have set
\begin{align*}
  \sigma_1 &= r_1 r_2 r_3 t_1 t_2 t_3 \\
  \sigma_2 &= r_1 r_2 s_3^2 t_1 t_2 + r_1 r_3 s_2^2 t_1 t_3 + r_2 r_3 s_1^2 t_2 t_3 \\
  \sigma_3 &= r_1 s_2^2 s_3^2 t_1 + r_2 s_1^2 s_3^2 t_2 + r_3 s_1^2 s_2^2 t_3 \\
  \sigma_4 &= s_1^2 s_2^2 s_3^2 \,.
\end{align*}
We see that $X$ does not pass through fixed points of~$\gamma$ and is smooth
(and therefore $S$ is smooth) if and only if $c D \neq 0$ and the curves $E_j$
are smooth. The latter condition is
\[ 0 \neq \prod_{j=1}^3 (r_j t_j (s_j^2 - 4 r_j t_j))
      = \sigma_1 (\sigma_4 - 4 \sigma_3 + 16 \sigma_2 - 64 \sigma_1) \,.
\]

We obtain the following result.

\begin{Lemma}
  Assume that the $r_j, s_j, t_j$ and $c$ are integers satisfying
  \[ r_j t_j (s_j^2 - 4 r_j t_j) \neq 0 \quad\text{for all~$j$ \qquad and}
      \qquad c \neq 0, \quad D \neq 0.
  \]
  Let $d_1, d_2, d_3$ be squarefree integers.
  Then the twist $X_{(d_1,d_2,d_3)}$ of $X \to S$ fails to have
  points over some completion of~$\Q$ unless all the prime divisors
  of $d_1 d_2 d_3$ are prime divisors
  of~$2 c D \sigma_1 (\sigma_4 - 4 \sigma_3 + 16 \sigma_2 - 64 \sigma_1)$.
\end{Lemma}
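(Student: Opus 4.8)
The plan is to analyze the reduction of the twist $X_{(d_1,d_2,d_3)}$ at a prime $p$ not dividing $2 c D \sigma_1 (\sigma_4 - 4\sigma_3 + 16\sigma_2 - 64\sigma_1)$ and to show that if such a $p$ divides one of the $d_j$, then $X_{(d_1,d_2,d_3)}$ has no $\Q_p$-point, while bad behaviour at the archimedean place or at places dividing the order of $\Gamma$ (i.e., at $2$) is already accounted for by the exceptional primes we have allowed. Concretely, the twist is, by construction, a torsor under $\Gamma$ over the good-reduction surface~$S$; since we have assumed that $X \to S$ (and hence $S$ itself) has good reduction away from the listed primes, a twist by a cohomology class that is ramified at a prime~$p$ outside this set cannot have points locally at~$p$.

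First I would recall, from Remark~\ref{R:twists}, that it suffices to restrict to twists that are unramified outside the archimedean places, the places dividing $\#\Gamma = 8$, and the places of bad reduction for~$X$ or~$\pi$. The place at infinity and the prime~$2$ are harmless here because $2 \mid 2cD\sigma_1(\cdots)$ automatically, so I only need to pin down the bad primes. By the discussion preceding the lemma, $X$ passes through a fixed point of~$\gamma$ exactly when $cD = 0$, and the curves~$E_j$ are smooth exactly when $\sigma_1(\sigma_4 - 4\sigma_3 + 16\sigma_2 - 64\sigma_1) \neq 0$; so the set of primes of bad reduction for the family is contained in the set of prime divisors of $2cD\sigma_1(\sigma_4 - 4\sigma_3 + 16\sigma_2 - 64\sigma_1)$. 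I would verify this reduction statement $p$-adically: for $p$ outside this set, the equations~\eqref{E:eqns} (with the integral $r_j,s_j,t_j,c$) reduce to a smooth surface over~$\F_p$ on which $\Gamma$ acts freely, so $\pi$ has good reduction at~$p$ and $X \to S$ extends to a finite étale $\Gamma$-torsor over a smooth model over~$\Z_p$.

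The key step is then the local-constancy / Chevalley--Weil argument at such a good prime~$p$. If $p \mid d_j$ for some~$j$, then the cohomology class $(d_1,d_2,d_3) \in H^1(\Q,\mu_2)^3$ is ramified at~$p$: its restriction to the inertia subgroup at~$p$ is nontrivial, since $d_j$ is squarefree and $p \mid d_j$. On the other hand, any $\Q_p$-point of $X_{(d_1,d_2,d_3)}$ would, via reduction to the smooth $\F_p$-model of~$S$, produce (after Hensel lifting, using that the torsor is étale over the smooth $\Z_p$-model) an unramified class locally at~$p$ representing the restriction of $(d_1,d_2,d_3)$ — that is, the image of the twist class in $H^1(\Q_p,\Gamma)$ would have to be unramified. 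This contradicts the ramification of $d_j$ at~$p$. Hence $X_{(d_1,d_2,d_3)}(\Q_p) = \emptyset$, which gives the claim.

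The main obstacle I anticipate is making the passage ``$\Q_p$-point on the twist $\Rightarrow$ unramified local class'' fully rigorous: one needs a good integral model of the $\Gamma$-torsor $X \to S$ over $\Z_p$ — equivalently, to know that at a good prime the torsor extends to a finite étale cover of a smooth proper $\Z_p$-model — and then to invoke that for a finite étale cover of a smooth $\Z_p$-scheme, a $\Q_p$-point of the base lifts (by smoothness of the cover and Hensel's lemma, after a possibly unramified base change) to a point over an unramified extension, so that the corresponding torsor class in $H^1(\Q_p,\Gamma)$ is unramified. This is exactly the local form of Chevalley--Weil (Theorem~\ref{T:CW}); the work is to check that the explicit integral equations~\eqref{E:eqns} do give such a smooth model away from the listed primes, which is where the computation of~$D$ and of $\sigma_1(\sigma_4 - 4\sigma_3 + 16\sigma_2 - 64\sigma_1)$ as the relevant ``discriminants'' is used.
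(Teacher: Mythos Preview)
Your proposal is correct and matches the paper's approach: the paper does not give a separate proof of this lemma but presents it as a direct consequence of the preceding discussion, namely the general fact from Remark~\ref{R:twists} that only twists unramified outside the bad primes for~$\pi$ (together with~$2$) can have points everywhere locally, combined with the explicit identification of the bad primes via the conditions $cD \neq 0$ and $\sigma_1(\sigma_4 - 4\sigma_3 + 16\sigma_2 - 64\sigma_1) \neq 0$. Your write-up simply makes the local Chevalley--Weil step explicit, which the paper leaves implicit.
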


We now consider the genus~1 curves at infinity on~$X$ and on~$S$. On~$X$, a typical
$\Gamma$-orbit of these is as follows. We fix a point at~infinity on~$E_1$
and a point with zero $x$-coordinate on~$E_2$, say those with
$y_1/x_1^2 = \sqrt{r_1}$ and $y_2 = \sqrt{t_2}$; call them $P$ and~$Q$
and the other points with the same $x$-coordinate $\bar{P}$ and~$\bar{Q}$.
Then the curve $\{P\} \times \{Q\} \times E_3$ is contained in~$X$, and
the generators of~$\Gamma$ act as follows.
\begin{align*}
  \gamma_1 &\colon (P, Q, R) \longmapsto (P, \bar{Q}, -R + T_3) \\
  \gamma_2 &\colon (P, Q, R) \longmapsto (P, Q, R + T_3) \\
  \gamma_3 &\colon (P, Q, R) \longmapsto (\bar{P}, Q, R)\,.
\end{align*}
The orbit of the curve consists of four curves, the other three being
$\{\bar{P}\} \times \{Q\} \times E_3$, $\{P\} \times \{\bar{Q}\} \times E_3$
and $\{\bar{P}\} \times \{\bar{Q}\} \times E_3$. The action splits into
independent actions on $\{P, \bar{P}\}$ and on $\{Q, \bar{Q}\} \times E_3$.
Invariants for the latter are given by $x_3^2$ and~$\sqrt{t_2} x_3 y_3$
(where $Q \mapsto \bar{Q}$ corresponds to $\sqrt{t_2} \mapsto -\sqrt{t_2}$).
There is the relation
\[ (\sqrt{t_2} x_3 y_3)^2 = t_2 x_3^2 (r_3 x_3^4 + s_3 x_3^2 + t_3) \,, \]
which in terms of $Y = \sqrt{t_2} x_3 y_3$ and~$X = x_3^2$ can be written as
\[ Y^2 = t_2 X (r_3 X^2 + s_3 X + t_3) \,, \]
which is the quadratic twist by~$t_2$ of the Jacobian elliptic curve of~$E_3$.
Switching the roles of zero and infinity, we get the $r_2$-twist of the
same elliptic curve. We obtain the other curves by cyclic permutation of
the indices. This gives us the quadratic twists by~$t_3$ and by~$r_3$ of
the Jacobian elliptic curve of~$E_1$ and the quadratic twists by~$t_1$ and
by~$r_1$ of the Jacobian elliptic curve of~$E_2$. These six curves are
arranged in the form of a hexagon, with intersection points at the origin
and the obvious (rational) point of order~2. We can now give the
following refinement of Theorem~\ref{T:finite}:

\begin{Theorem}
  Assume that $E_1$, $E_2$, $E_3$ are defined over~$\Q$ and that
  $T_1$, $T_2$, $T_3$ are rational points. Then we
  always have $\#S(\Q) \ge 6$. $S'(\Q)$ is finite,
  and $S(\Q)$ is infinite if and only if at least one of the genus~$1$ curves
  on~$S$ has a rational point and positive rank.
\end{Theorem}

\begin{proof}
  The first statement follows, since the six points where
  adjacent elliptic curves at infinity in~$S$ intersect
  (compare Figure~\ref{Fighex}) are always rational. For the
  second statement, we recall that $S'(\Q)$ is finite by Theorem~\ref{T:finite}.
  Therefore $S(\Q)$ is infinite if and only if one of the genus~$1$ curves
  on~$S$ has infinitely many points, i.e., it has a rational point and positive rank.
\end{proof}

\begin{Remark}
  If we consider a primary Burniat surface defined over~$\Q$ as described in
  Remark~\ref{R:cyclic}, then the absolute Galois group of~$\Q$ acts on the
  six curves at infinity and therefore also on their intersection points
  by cyclic permutations of order~$3$. So in this case, these six intersection
  points will not be rational (they are defined over~$K$).
\end{Remark}

\begin{Remark}
  The preceding theorem shows that $S$ always has rational points
  (under the assumptions of the theorem). This is not true in general for its
  \'etale covering~$X$. For example, we can take $E_1$ to be without rational
  points as for the surface considered in Proposition~\ref{P:Example1} below.
  However, there will always be a twist $X' \to S$ of the covering
  such that $X'$ has rational points.
\end{Remark}


\section{Determining the set of rational points on the twists} \label{S:find}

This section is dedicated to the discussion how to  determine the set $X_\xi(\Q)$ of rational points
on  the relevant twists of~$X$. For this, note first that $X_\xi$
is of the same form as~$X$, as shown by the defining equations~\eqref{E:twist}.
For the description of the available methods, it is therefore sufficient
to deal with~$X$ itself.

We have the three projections $\psi_j \colon X \inj E_1 \times E_2 \times E_3 \to E_j$
at our disposal. There are two cases.

\subsection{Some $E_j$ fails to have rational points}

If we can show that $E_j(\Q) = \emptyset$ for some~$j$,
then clearly $X(\Q) = \emptyset$, too. To decide whether $E_j$ has rational
points, we can, on the one hand, try to find a rational point by a systematic
search, and, on the other hand, try to prove that no rational point exists
by performing a `second descent' on the 2-covering $E_j$ of~$J_j$, its Jacobian
elliptic curve. The latter can be done explicitly by computing the
`2-Selmer set' of~$E_j$, compare~\cite{BruinStoll2009}; if this set is empty,
then $E_j(\Q) = \emptyset$ as well. If necessary, a further descent step
can be performed on the two-coverings of~$E_j$, compare~\cite{Stamminger}.

\subsection{All $E_j$ have rational points}

If we find some $P_j \in E_j(\Q)$ for all~$j$, then the~$E_j$ are elliptic curves,
and we can try to determine the free abelian rank of the groups~$E_j(\Q)$.
The standard tool for this is Cremona's {\sf mwrank} program as described
in~\cite{CremonaBook}, whose functionality is included in Magma and SAGE.
This is based on 2-descent. If this is not sufficient to determine the
rank, then one can also perform `higher descents', see for example~\cite{CFOSS}
and the references given there.

There are several cases, according to how many of the sets~$E_j(\Q)$ are finite.

\subsection*{All $E_j(\Q)$ are finite}

Then $(E_1 \times E_2 \times E_3)(\Q)$ is finite, and we only have to check which
of these finitely many points lie on~$X$.

\subsection*{Two of the $E_j(\Q)$ are finite}

Say $E_1(\Q)$ and~$E_2(\Q)$ are finite. Then
we can easily determine the set of rational points on~$X$ by checking the
finitely many fibers of $(\psi_1, \psi_2) \colon X \to E_1 \times E_2$
above rational points of $E_1 \times E_2$; note that these fibers are finite
(of size two), hence it is easy to determine their rational points.

\subsection*{Exactly one of the $E_j(\Q)$ is finite}

Say $E_1(\Q)$ is finite. In this case, we can at least reduce to the
finitely many fibers of $\psi_1$ over rational points on~$E_1$. These fibers
are (generically) smooth curves of genus~5 contained in the product of the
two other genus~1 curves (and defined by the relation that the product
of the two $x$-coordinates is constant).

We then need a way of determining
the set of rational points on  a genus $5$ curve~$C$ sitting in a product of two elliptic
curves of positive rank. After making a transformation $x \leftarrow c'/x$
of the $x$-coordinate on one of the two genus~$1$ curves, the product condition
becomes the condition that the two $x$-coordinates are equal.
The curve~$C$ can then be written as a bidouble cover of~$\BP^1$,
given by a pair of equations
\[ u^2 = a_0 x^4 + a_1 x^2 z^2 + a_2 z^4\,, \qquad
   v^2 = b_0 x^4 + b_1 x^2 z^2 + b_2 z^4\,.
\]
This curve is a double cover of the hyperelliptic curve
\[ D \colon y^2 = (a_0 x^4 + a_1 x^2 z^2 + a_2 z^4) (b_0 x^4 + b_1 x^2 z^2 + b_2 z^4) \,, \]
which is of genus~3 (unless the two factors have common roots), so the Jacobian
of~$C$ splits up to isogeny into a product of three elliptic curves
(with the Jacobians of $E_1$ and~$E_2$ among them) and an \Abelian\  surface.
More precisely, $D$ covers the genus~1 curve
\[ E \colon y^2 = (a_0 {x'}^2 + a_1 x' z' + a_2 {z'}^2) (b_0 {x'}^2 + b_1 x' z' + b_2 {z'}^2) \]
and the genus~2 curve
\[ F \colon w^2 = x' z' (a_0 {x'}^2 + a_1 x' z' + a_2 {z'}^2) (b_0 {x'}^2 + b_1 x' z' + b_2 {z'}^2) \]
(where $(x' : z' : w) = (x^2 : z^2 : xyz)$).
 If the third elliptic curve has rank zero or the Jacobian of~$F$ has Mordell-Weil
rank at most~1 (the rank can usually be determined by the methods of~\cite{Stoll2Desc}),
then methods are available that in many cases will be able
to determine~$C(\Q)$, see for example~\cite{BruinStollMWS}. Even when the
rank is 2 or larger, `Elliptic curve Chabauty' methods might apply to $F$ or also~$D$,
see~\cite{BruinECC}.

The possible degenerations of a fiber of the genus $5$ fibration $X \rightarrow E_j$ are as follows.
\begin{enumerate}[(1)]
  \item A curve of geometric genus~$3$ with two nodes. Then $D$ and~$F$ above
        each degenerate to a curve of (geometric) genus~$1$, and we can hope
        that at least one of them has only finitely many rational points.
  \item Two curves of geometric genus~$1$ intersecting in four points.
        These map to curves of type~I on~$S$; we can try to find out whether
        they have finitely many or infinitely many rational points.
 \end{enumerate}

\subsection*{All $E_j(\Q)$ are infinite}

In this situation, there must be finitely many points $p_1, \ldots , p_r \in E_j(\Q)$, such that the sporadic rational points of $X$ are contained in the fibers over $p_1, \ldots , p_r$. But there are no methods available so far to determine these points, hence we cannot yet deal with this situation.

We end this section by taking a quick look at the situation when there are curves of type I or~II
on~$S$ and observe the following.
\begin{Remark}
  If a curve of type I or~II on~$S$ has only finitely many rational points, it will not present
  any difficulties. Otherwise:
  \begin{enumerate}[(I)]
    \item When there is a curve~$E$ of type~I (defined over~$\Q$) on~$S$ such
          that $E(\Q)$ is infinite, then there will be some twist~$X_\xi$ of~$X$
          such that $E$ lifts to an elliptic curve~$E'$ on~$X_\xi$
          with infinitely many rational points. Since $E'$ is isomorphic to two
          of the~$E_{j,\xi}$, we need the third~$E_{j,\xi}$ to be of rank~$0$
          for our approach to work.
    \item When there is a curve~$E$ of type~II (defined over~$\Q$) on~$S$ such
          that $E(\Q)$ is infinite, then there will again be some twist~$X_\xi$ of~$X$
          such that $E$ lifts to an elliptic curve~$E'$ on~$X_\xi$
          with infinitely many rational points. In this case, $E'$ is isogenous
          to all three~$E_{j,\xi}$, so all three curves will have positive rank,
          and our current methods will necessarily fail.
  \end{enumerate}
\end{Remark}


\section{Examples} \label{S:examples}

We first give a number of examples in the generic case, when $S$ does
not have curves of types I or~II.

As a first example, take
\begin{equation} \label{E:ex1}
  E_1 \colon y_1^2 = -x_1^4 - 1\,, \quad
  E_2 \colon y_2^2 = x_2^4 - 1\,, \quad
  E_3 \colon y_3^2 = x_3^4 + 1
\end{equation}
and $c = 2$. All three curves have bad reduction only at~$2$, and
$D = 50625 = 3^4 \cdot 5^4$, so the set of bad primes is $\{2,3,5\}$.
There are $2^{3 \cdot 4} = 4096$ triples $(d_1, d_2, d_3)$ to be checked.
Of these, only $(1,-1,1)$, $(1,-1,-1)$, $(2,-2,2)$ and $(2,-2,-2)$
satisfy the condition that the twisted product of the~$E_j$ has a rational
point. It turns out that all the twisted genus~1 curves that occur have
only finitely many points. We can find them all and thence determine
the sets $X_{(d_1,d_2,d_3)}(\Q)$. It turns out that these sets are empty
for the triples $(2,-2,2)$ and $(2,-2,-2)$, and for the other two triples
all these points map to points at infinity on~$S$; they fall into five
orbits each under the action of~$\Gamma$ on the two twists.
This results in the following.

\begin{Proposition} \label{P:Example1}
  Let $X \subset E_1 \times E_2 \times E_3$ with $E_j$ as in~\eqref{E:ex1} be defined
  by $x_1 x_2 x_3 = 2$ and set $S = X/\Gamma$. Then $S'(\Q) = \emptyset$ and
  $\#S(\Q) = 10$.
\end{Proposition}

As our second example, we consider
\begin{equation} \label{E:ex2}
  E_1 \colon y_1^2 = -x_1^4 - 1\,, \quad
  E_2 \colon y_2^2 = x_2^4 + 2 x_2^2 + 2\,, \quad
  E_3 \colon y_3^2 = -2 x_3^4 - 2 x_3^2 + 1
\end{equation}
and $c = 1$. The curves have bad reduction at most at $2$ and~$3$, and
$D = 256 = 2^8$, so the set of bad primes is~$\{2,3\}$. There are six triples
$(d_1,d_2,d_3)$ such that the corresponding twist of $E_1 \times E_2 \times E_3$
has rational points, namely $(1, -1, 1)$, $(1, -1, 2)$, $(1, -2, 2)$, $(-2, -1, 1)$,
$(-2, -1, 2)$ and $(-2, -2, 2)$. For the third and the last, the twisted product
has only finitely many rational points. The third gives no contribution (the rational
points are not on the twist of~$X$), whereas the last gives two rational
points at infinity on~$S$. For the remaining four triples, exactly one of the
twisted curves has infinitely many rational points (rank~$1$).
However, the other two curves have only rational points with $x_j \in  \{0, \infty\}$,
so all rational points on the corresponding twists of~$X$ map to points at infinity
of~$S$, of which there are now infinitely many (two of the six genus~1 curves
at infinity on~$S$ have infinitely many rational points). We obtain the following.

\begin{Proposition}
  Let $X \subset E_1 \times E_2 \times E_3$ with $E_j$ as in~\eqref{E:ex2} be defined
  by $x_1 x_2 x_3 = 1$ and set $S = X/\Gamma$. Then $S'(\Q) = \emptyset$ and
  $S(\Q)$ is infinite.
\end{Proposition}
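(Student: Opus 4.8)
The plan is to follow the same template as in the first example, but now keeping careful track of the fact that one of the constituent genus~$1$ curves will turn out to have positive rank. First I would verify that the three curves~$E_j$ in~\eqref{E:ex2} have good reduction outside $\{2,3\}$ by inspecting their quartic models, and compute the `discriminant'~$D$ from Section~\ref{S:explicit}; a direct substitution of $(r_j,s_j,t_j) = (-1,0,-1),\,(1,2,2),\,(-2,-2,1)$ and $c=1$ into the displayed formula for~$D$ gives $D = 256 = 2^8$, so by the Lemma in Section~\ref{S:explicit} the set of bad primes is~$\{2,3\}$, and every relevant twist $X_{(d_1,d_2,d_3)}$ has $d_1d_2d_3$ supported on $\{2,3\}$.

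Next I would enumerate the twists $X_{(d_1,d_2,d_3)}$ with squarefree $d_j$ supported on $\{2,3\}$ (finitely many) and discard those for which the twisted product $E_{1,\xi}\times E_{2,\xi}\times E_{3,\xi}$ has no rational point; this is a local computation at the bad primes and at~$\R$, and it leaves the six triples listed in the text. For the third triple $(1,-2,2)$ and the last triple $(-2,-2,2)$ one checks that all three twisted genus~$1$ curves have rank~$0$, so that $(E_{1,\xi}\times E_{2,\xi}\times E_{3,\xi})(\Q)$ is finite; intersecting with the twist of~$X$ then shows $X_{(1,-2,2)}(\Q)$ contributes nothing and $X_{(-2,-2,2)}(\Q)$ contributes only two points at infinity on~$S$. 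For each of the remaining four triples I would run \textsf{mwrank} (or the 2-descent built into Magma) on the three twisted curves, confirming that exactly one of them has rank~$1$ and the other two have rank~$0$; for the two rank-$0$ curves I would check that their rational points all have $x_j \in \{0,\infty\}$, and for the rank-$1$ curve observe that its $x$-coordinate is unconstrained. Since on each such twist two of the $x$-coordinates are forced to lie in $\{0,\infty\}$, every rational point of $X_\xi$ lies on one of the curves at infinity (Section~\ref{SS:2const}), hence maps to a point at infinity on~$S$, and in particular to a point on one of the six genus~$1$ curves at infinity.

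It then follows that $S'(\Q) = \emptyset$: by Remark~\ref{R:twists} every sporadic rational point of~$S$ would lift to a rational point on some relevant twist $X_\xi$ lying off the curves of geometric genus~$1$, but we have just seen that all rational points on all relevant twists lie on curves at infinity, whose images are exactly the six genus~$1$ curves at infinity on~$S$. Finiteness of $S'(\Q)$ is of course already guaranteed by Theorem~\ref{T:finite}, but here we get the stronger statement that this set is empty. Finally, $S(\Q)$ is infinite because the rank-$1$ twisted curve appearing above is (by the analysis of the hexagon of curves at infinity in Section~\ref{S:explicit}) a quadratic twist of the Jacobian of one of the~$E_j$ realized as one of the six elliptic curves at infinity on~$S$; having a rational point and positive rank, it contributes infinitely many rational points to~$S(\Q)$, and the concluding Theorem of Section~\ref{S:explicit} applies.

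The main obstacle I expect is not conceptual but computational: one must genuinely determine the Mordell--Weil ranks of the twisted genus~$1$ curves (so that a plain 2-descent is decisive, with no need for a higher descent), and one must verify that on the rank-$0$ twisted curves there are no `unexpected' rational points with $x_j \notin \{0,\infty\}$ — equivalently, that the rational points of those curves are exactly the obvious torsion points. Both of these are finite checks that \textsf{mwrank}/Magma can carry out, but they are the steps where something could in principle go wrong (e.g.\ a Tate--Shafarevich obstruction forcing a second descent), so they are where the real work lies.
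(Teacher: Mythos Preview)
Your proposal is correct and follows essentially the same approach as the paper: compute the bad primes via~$D$, enumerate the locally soluble twists, and for each surviving twist use rank computations on the three twisted genus~$1$ curves to pin down all rational points, observing that they all land on the hexagon at infinity. One small imprecision: in your final step you identify the rank-$1$ twisted curve~$E_{j,\xi}$ directly with one of the six elliptic curves at infinity on~$S$, but these are not literally the same object (the latter are quadratic twists of the Jacobians of the~$E_j$ by the coefficients~$r_k,t_k$); the clean way to conclude that $S(\Q)$ is infinite is either to note that the degree-$8$ map $X_\xi \to S$ takes the infinitely many rational points on~$X_\xi$ to infinitely many points on~$S$, or, as the paper does, to check directly that two of the six curves at infinity on~$S$ have positive rank.
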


As our third example, we take
\begin{equation} \label{E:ex3}
  E_1 \colon y_1^2 = 2 x_1^4 + x_1^2 + 1\,, \quad
  E_2 \colon y_2^2 = x_2^4 - x_2^2 + 1\,, \quad
  E_3 \colon y_3^2 = x_3^4 - x_3^2 + 4
\end{equation}
and $c = 1$. The curves have bad reduction at $2$, $3$, $5$ and~$7$, and
$D = 436 = 2^2 \cdot 109$, so that the set of bad primes is~$\{2, 3, 5, 7, 109\}$.
There are two triples $(d_1,d_2,d_3)$ such that the corresponding twist of the product
has rational points; they are $(1, 1, 1)$ and~$(1, 1, 2)$. In both cases, two of
the twisted curves have rank~$1$ and the third has finitely many rational points.
For the second triple, these points all have $x \in \{0, \infty\}$, so that we
do not obtain a contribution to~$S'(\Q)$. For the first triple, the twist of~$E_2$
has eight rational points, four of which have $x \in \{0, \infty\}$ and the other
four have $x \in \{\pm 1\}$. For each of these four points, the curve $D$ is given by
\[ D \colon y^2 = (2 x^4 + x^2 + 1)(4 x^4 - x^2 + 1) \,. \]
The elliptic curve~$E$ covered by it has rank~$1$, but for the genus~2 curve
\[ F \colon y^2 = x (2 x^2 + x + 1) (4 x^2 - x + 1) \]
that is also covered by~$D$, one can show (using the algorithm of~\cite{Stoll2Desc})
that its Jacobian has Mordell-Weil rank~$1$. Then Chabauty's method combined with
the Mordell-Weil sieve (see~\cite{BruinStollMWS}) can be used to show that
\[ F(\Q) = \{\infty, (0, 0), (1, 4), (1, -4)\} \,. \]
Note that the twist $(1,1,1)$ is the original surface~$X$. Translating back to the
coordinates on $E_1$, $E_2$, $E_3$, we see that all points on~$X$ must have
$x_1, x_2, x_3 \in \{\pm 1\}$. Since each of the three curves has a pair of rational
points with $x = 1$ and also with $x = -1$, this gives a total of $32$~rational
points on~$X$ mapping to~$S'$. They fall into four $\Gamma$-orbits and therefore
give rise to four rational points on~$S'$. Since of the six elliptic curves at infinity
on~$S$, four have positive rank, we obtain the following.

\begin{Proposition}
  Let $X \subset E_1 \times E_2 \times E_3$ with $E_j$ as in~\eqref{E:ex3} be defined
  by $x_1 x_2 x_3 = 1$ and set $S = X/\Gamma$. Then $S'(\Q)$ consists of four points and
  $S(\Q)$ is infinite.
\end{Proposition}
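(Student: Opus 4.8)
The plan is to carry out the descent procedure of Remark~\ref{R:twists}. By that remark, $S(\Q) = \bigcup_\xi \pi_\xi\bigl(X_\xi(\Q)\bigr)$, where $\xi$ runs over the twists $X_{(d_1,d_2,d_3)}$ from~\eqref{E:twist} that are unramified outside the archimedean place, the prime~$2$, and the primes of bad reduction of~$X$ and~$\pi$. First I would nail down the bad primes: the curves $E_j$ in~\eqref{E:ex3} have bad reduction only at $2,3,5,7$, and the quantity~$D$ defined in Section~\ref{S:explicit} equals $436 = 2^2 \cdot 109$, so I may take $d_1,d_2,d_3$ squarefree with prime divisors in $\{2,3,5,7,109\}$. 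I then loop over all such triples and discard those for which the twisted product $E_{1,\xi}\times E_{2,\xi}\times E_{3,\xi}$ is not everywhere locally soluble (for which it suffices that one factor lacks points over some completion of~$\Q$); the claim is that only $(1,1,1)$ and $(1,1,2)$ survive, and that $(1,1,1)$ recovers $X$ itself.

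For each surviving twist I would determine the Mordell--Weil ranks of the three genus~$1$ factors by $2$-descent (Cremona's \textsf{mwrank}), resorting to a higher descent where that is inconclusive; the target is to show that in each case exactly one factor has rank~$0$ and the other two have rank~$1$, and to enumerate the rational points on the rank-$0$ factor. For $(1,1,2)$ every rational point on its rank-$0$ factor has $x$-coordinate in $\{0,\infty\}$, and checking the two-point fibres of the corresponding projection $\psi\colon X_{(1,1,2)}\to E_{\bullet,(1,1,2)}$ shows they contain no rational point outside the curves at infinity, so this twist contributes nothing to $S'(\Q)$. For $(1,1,1)=X$ the rank-$0$ factor is $E_2$ with $\#E_2(\Q)=8$; four of those points have $x_2\in\{0,\infty\}$ and lie over curves at infinity, and the other four have $x_2\in\{\pm1\}$.

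The heart of the matter is to control the rational points on the genus~$5$ fibres of $\psi_2\colon X\to E_2$ over the four points with $x_2=\pm1$. Following Section~\ref{S:find}, once the product condition is normalised to $x_1=x_3$ such a fibre~$C$ is the bidouble cover of~$\BP^1$ cut out by $y_1^2 = 2x^4+x^2+1$ and $y_3^2 = 4x^4-x^2+1$; it double-covers the genus~$3$ curve $D\colon y^2=(2x^4+x^2+1)(4x^4-x^2+1)$, which in turn covers the genus~$1$ curve $E\colon y^2=(2{x'}^2+x'+1)(4{x'}^2-x'+1)$ and the genus~$2$ curve $F\colon y^2=x'(2{x'}^2+x'+1)(4{x'}^2-x'+1)$. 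Since $E/\Q$ has rank~$1$, equal to $\dim\Jac(E)$, neither classical nor elliptic-curve Chabauty is available on~$E$ over~$\Q$; so the decisive step is to prove, via the $2$-descent algorithm of~\cite{Stoll2Desc}, that $\Jac(F)(\Q)$ has rank exactly~$1$, which is strictly less than~$2=\dim\Jac(F)$, and then to combine Chabauty's method with the Mordell--Weil sieve~\cite{BruinStollMWS} to obtain $F(\Q)=\{\infty,(0,0),(1,4),(1,-4)\}$. (Alternatively, since $\Jac(D)\sim\Jac(E)\times\Jac(F)$ has rank~$2<3$, one could try to run Chabauty directly on~$D$.) I expect this to be the main obstacle: it relies both on the Mordell--Weil rank of~$\Jac(F)$ being provably~$1$ — the descent might in principle leave a gap caused by nontrivial $2$-torsion in the Tate--Shafarevich group — and on the combined Chabauty--sieve computation terminating.

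Finally I would propagate the finiteness back up the tower: $F(\Q)$ finite forces $D(\Q)$ and hence $C(\Q)$ finite, and tracing the coordinate changes shows that every point of~$X(\Q)$ over one of the four points of~$E_2$ with $x_2=\pm1$ has $x_1,x_3\in\{\pm1\}$; combined with the case $x_2\in\{0,\infty\}$ already treated, every point of~$X(\Q)$ mapping into~$S'$ satisfies $x_1,x_2,x_3\in\{\pm1\}$. Each~$E_j$ has exactly the four rational points $(\pm1,\pm y_j)$ lying over $x_j=\pm1$ (here each $r_j+s_j+t_j$ is a square), and the constraint $x_1x_2x_3=1$ forces an even number of $-1$'s among $x_1,x_2,x_3$, leaving $4$ sign patterns and $2^3$ choices for the $y_j$, i.e.\ $32$ points on~$X$; as $\Gamma$ acts freely, these fall into $32/8=4$ orbits, so $S'(\Q)$ has $4$ points. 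For the last assertion one checks, again by descent, that four of the six elliptic curves at infinity on~$S$ — the quadratic twists of $\Jac(E_j)$ by~$r_j$ and by~$t_j$ described in Section~\ref{S:explicit} — have positive Mordell--Weil rank, so $S(\Q)$ is infinite.
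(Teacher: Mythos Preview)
Your proposal is correct and follows essentially the same route as the paper: the same bad primes $\{2,3,5,7,109\}$, the same two surviving twists $(1,1,1)$ and $(1,1,2)$, the same rank analysis identifying $E_2$ as the rank-$0$ factor for $X$, the same reduction of the genus~$5$ fibres through $D$ to the genus~$2$ curve~$F$, and the same Chabauty--Mordell--Weil sieve computation giving $F(\Q)=\{\infty,(0,0),(1,\pm4)\}$, leading to the $32$ points in four $\Gamma$-orbits. Your explicit count via sign patterns and your side remark on running Chabauty on~$D$ directly are minor elaborations, but the argument is the paper's.
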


To conclude, we present an example where $S$ contains a curve of type~I with infinitely
many rational points. We take
\begin{equation} \label{E:ex4}
  E_1 \colon y_1^2 = 2 x_1^4 + x_1^2 + 1\,, \quad
  E_2 \colon y_2^2 = x_1^4 + x_2^2 + 2\,, \quad
  E_3 \colon y_3^2 = x_3^4 - x_3^2 + 1
\end{equation}
and $c = 1$. The curves have bad reduction at $2$, $3$ and~$7$, and
$D = 100 = 2^2 \cdot 5^2$, so that the set of bad primes is~$\{2, 3, 5, 7\}$.
There are four triples $(d_1,d_2,d_3)$ such that the corresponding twist of the product
has rational points; they are $(1, 1, 1)$, $(1, 1, 2)$, $(2, 1, 1)$ and~$(2, 1, 2)$.
For the twists corresponding to the last three, there is always one twisted~$E_j$
that has only finitely many rational points, all of which have $x \in \{0, \infty\}$,
so we obtain no contribution outside the curves at infinity on~$S$ from these twists.
For the first twist (which is $X$ itself), we have that $E_1$ and~$E_2$ both have rank~$1$
and $E_3$ has eight rational points, four of which have $x \notin \{0, \infty\}$.
The fibers above these four points form a $\Gamma$-orbit, and each fiber splits
as a union of two elliptic curves intersecting in four points, thus producing
a type~I curve on~$S$. The curves on~$X$ are isomorphic to~$E_1$ (and~$E_2$); they
therefore have infinitely many rational points, and the same is true for the
type~I curve on~$S$ they map to. Since there are no further contributions, we
have the following result.

\begin{Proposition}
  Let $X \subset E_1 \times E_2 \times E_3$ with $E_j$ as in~\eqref{E:ex4} be defined
  by $x_1 x_2 x_3 = 1$ and set $S = X/\Gamma$. Then $S$ contains exactly one curve
  of type~I, which has infinitely many rational points, and no curve of type~II.
  Furthermore, three out of the six
  curves at infinity on~$S$ contain infinitely many rational points. There are
  no sporadic rational points on~$S$.
\end{Proposition}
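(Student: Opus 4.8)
The plan is to apply the machinery of Remark~\ref{R:twists} and Section~\ref{S:find} to the explicit data in~\eqref{E:ex4}, exactly as in the preceding examples. First I would compute the ``discriminant''~$D$ of Section~\ref{S:explicit} for these curves and check directly that $D = 100 = 2^2 \cdot 5^2$, that the three curves~$E_j$ have bad reduction only above $2$, $3$ and~$7$, and that $\sigma_1(\sigma_4 - 4\sigma_3 + 16\sigma_2 - 64\sigma_1)$ contributes no further primes. The Lemma in Section~\ref{S:explicit} then shows that the only twists $X_{(d_1,d_2,d_3)}$ that can carry rational points are those whose squarefree parameters $d_1,d_2,d_3$ are supported on $\{2,3,5,7\}$. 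Running through this finite list and discarding every triple for which the twisted product $E_{1,\xi}\times E_{2,\xi}\times E_{3,\xi}$ fails to have points over~$\R$ or over some~$\Q_p$ (a routine local computation) should leave exactly the four triples $(1,1,1)$, $(1,1,2)$, $(2,1,1)$ and~$(2,1,2)$.

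Next I would eliminate the three twists $(1,1,2)$, $(2,1,1)$ and~$(2,1,2)$. For each of them one of the three twisted genus-$1$ curves $E_{j,\xi}$ turns out to have Mordell--Weil rank~$0$ (a $2$-descent, e.g.\ with {\sf mwrank}), and one checks that every point of $E_{j,\xi}(\Q)$ has $x_j \in \{0,\infty\}$. Using the projection~$\psi_j$ as in Section~\ref{S:find} (the case in which exactly one $E_{j,\xi}(\Q)$ is finite), this forces every rational point of the twist into a fibre over $x_j \in \{0,\infty\}$, and such fibres are built out of the curves at infinity; hence these twists contribute nothing to $S'(\Q)$.

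The heart of the argument is the twist $(1,1,1)$, which is $X$ itself. Here I would show that $E_1$ and~$E_2$ have rank~$1$ and~$E_3$ has rank~$0$ with $\#E_3(\Q) = 8$, where four of the eight points satisfy $x_3 = \pm 1$ and the other four have $x_3 \in \{0,\infty\}$ (all by descent and a direct search). Since the moduli point of~$S$ has $a_1^2 = a_2^2 = \tfrac12 \neq 1 = a_3^2$ with no~$a_j$ vanishing, and since the relation among the~$a_j$ required for a curve of type~II cannot be satisfied here, the moduli point lies in~$N_1$ but in none of $M_2$, $M_4$, $N_2$, so Table~\ref{Tb:lowg} gives that $S$ has exactly one curve of type~I and no curve of type~II. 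Moreover, by the analysis in Section~\ref{SS:1const} the fibres of $\psi_3 \colon X \to E_3$ over the four points with $x_3 = \pm 1$ each split as a union of two curves isomorphic to~$E_1$ and~$E_2$; these eight curves form a single $\Gamma$-orbit whose image on~$S$ is precisely the curve of type~I, and since $E_1$ and~$E_2$ have positive rank this curve has infinitely many rational points. The fibres over the remaining four points of $E_3(\Q)$ consist of curves at infinity. Consequently every rational point of~$X$ maps either to a point at infinity or onto the type~I curve, so $X$ contributes no sporadic point; combined with the previous paragraph this yields $S'(\Q) = \emptyset$. Finally, identifying the six curves at infinity on~$S$ with the quadratic twists of $\Jac(E_1)$, $\Jac(E_2)$, $\Jac(E_3)$ by the relevant~$r_k$ and~$t_k$ as in Section~\ref{S:explicit} and computing their ranks shows that exactly three of them have positive rank, which proves the last assertion.

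I expect the main obstacle to be the package of Mordell--Weil rank computations: one really needs $E_1$ and~$E_2$ to have rank \emph{exactly}~$1$ (an underestimate would leave room for extra rational points on~$X$), $E_3$ to have rank~$0$, and exactly three of the six twisted Jacobians at infinity to have positive rank. Once these ranks are certified, the rest is bookkeeping: the local sieving of the twists, the determination of the finite point sets $E_{j,\xi}(\Q)$, and the routine check of which fibres of the various~$\psi_j$ degenerate into curves at infinity and which into curves of type~I. It is worth noting that, unlike the third example, no Chabauty argument on a higher-genus curve is needed here, because every curve carrying infinitely many rational points in this situation is elliptic.
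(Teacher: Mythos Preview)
Your proposal is correct and follows essentially the same route as the paper: compute the bad primes $\{2,3,5,7\}$, sieve the twists down to $(1,1,1)$, $(1,1,2)$, $(2,1,1)$, $(2,1,2)$, eliminate the last three via a rank-$0$ factor whose rational points all have $x\in\{0,\infty\}$, and then analyze the fibres of $\psi_3$ on $X$ itself using $\#E_3(\Q)=8$; the paper records the moduli point explicitly as $(u_1,u_2,u_3,d,v,w)=(2,\tfrac54,\tfrac14,0,4,-1)$, which is the coordinate version of your check that $a_1^2=a_2^2=\tfrac12\neq 1=a_3^2$. One small correction to your closing remarks: you do \emph{not} need $E_1$ and $E_2$ to have rank exactly~$1$ --- the finiteness argument runs entirely through $E_3$ having rank~$0$, and for $E_1$, $E_2$ only rank $\ge 1$ is used (to conclude the type~I curve has infinitely many points).
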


\begin{proof}
  It remains to show that the type~I curve detected above is the only extra low-genus
  curve on~$S$.
  This follows from the discussion in Section~\ref{S:low}, since the moduli point
  of~$S$ is given by $(u_1,u_2,u_3,d,v,w) = (2,\frac{5}{4},\frac{1}{4},0,4,-1)$,
  which is in~$N_1$, but neither in $M_2$, $M_4$ nor in~$N_2$.
  Also, one checks that three of the six curves at infinity have positive rank.
\end{proof}


\begin{bibdiv}
\begin{biblist}

\bib{burniat1}{article}{
   author={Bauer, I.},
   author={Catanese, F.},
   title={Burniat surfaces I: fundamental groups and moduli of primary
   Burniat surfaces},
   conference={
      title={Classification of algebraic varieties},
   },
   book={
      series={EMS Ser. Congr. Rep.},
      publisher={Eur. Math. Soc., Z\"urich},
   },
   date={2011},
   pages={49--76},
   review={\MR{2779467 (2012f:14074)}},
   doi={10.4171/007-1/3},
}

\bib{bacainoue}{article}{
   author={Bauer, Ingrid},
   author={Catanese, Fabrizio},
   title={Inoue type manifolds and Inoue surfaces: a connected component of
   the moduli space of surfaces with $K^2=7$, $p_g=0$},
   language={English, with English and Italian summaries},
   conference={
      title={Geometry and arithmetic},
   },
   book={
      series={EMS Ser. Congr. Rep.},
      publisher={Eur. Math. Soc., Z\"urich},
   },
   date={2012},
   pages={23--56},
   review={\MR{2987651}},
   doi={10.4171/119-1/2},
}
\bib{bogomolov}{article}{
   author={Bogomolov, F. A.},
   title={Families of curves on a surface of general type},
   language={Russian},
   journal={Dokl. Akad. Nauk SSSR},
   volume={236},
   date={1977},
   number={5},
   pages={1041--1044},
   issn={0002-3264},
   review={\MR{0457450 (56 \#15655)}},
}

\bib{BrownawellMasser}{article}{
   author={Brownawell, W. D.},
   author={Masser, D. W.},
   title={Vanishing sums in function fields},
   journal={Math. Proc. Cambridge Philos. Soc.},
   volume={100},
   date={1986},
   number={3},
   pages={427--434},
   issn={0305-0041},
   review={\MR{857720}},
   doi={10.1017/S0305004100066184},
}

\bib{BruinECC}{article}{
   author={Bruin, Nils},
   title={Chabauty methods using elliptic curves},
   journal={J. Reine Angew. Math.},
   volume={562},
   date={2003},
   pages={27--49},
   issn={0075-4102},
   review={\MR{2011330 (2004j:11051)}},
   doi={10.1515/crll.2003.076},
}

\bib{BruinStoll2009}{article}{
   author={Bruin, Nils},
   author={Stoll, Michael},
   title={Two-cover descent on hyperelliptic curves},
   journal={Math. Comp.},
   volume={78},
   date={2009},
   number={268},
   pages={2347--2370},
   issn={0025-5718},
   review={\MR{2521292 (2010e:11059)}},
   doi={10.1090/S0025-5718-09-02255-8},
}

\bib{BruinStollMWS}{article}{
   author={Bruin, Nils},
   author={Stoll, Michael},
   title={The Mordell-Weil sieve: proving non-existence of rational points
   on curves},
   journal={LMS J. Comput. Math.},
   volume={13},
   date={2010},
   pages={272--306},
   issn={1461-1570},
   review={\MR{2685127 (2011j:11118)}},
   doi={10.1112/S1461157009000187},
}

\bib{burniat}{article}{
   author={Burniat, Pol},
   title={Sur les surfaces de genre $P_{12}>1$},
   language={French},
   journal={Ann. Mat. Pura Appl. (4)},
   volume={71},
   date={1966},
   pages={1--24},
   issn={0003-4622},
   review={\MR{0206810 (34 \#6626)}},
}

\bib{ChevalleyWeil}{article}{
    author = {Chevalley, C.},
    author = {Weil, A.},
    title = {Un th\'eor\`eme d'arithm\'etique sur les courbes alg\'ebriques.},
    journal = {C. R. Acad. Sci., Paris},
    issn = {0001-4036},
    volume = {195},
    pages = {570--572},
    date = {1932},
    publisher = {Gauthier-Villars, Paris},
    language = {French},
    review = {\ ZMATH 58.0182.04}
}

\bib{CremonaBook}{book}{
   author={Cremona, J. E.},
   title={Algorithms for modular elliptic curves},
   edition={2},
   publisher={Cambridge University Press, Cambridge},
   date={1997},
   pages={vi+376},
   isbn={0-521-59820-6},
   review={\MR{1628193 (99e:11068)}},
}

\bib{CFOSS}{article}{
   author={Cremona, J. E.},
   author={Fisher, T. A.},
   author={O'Neil, C.},
   author={Simon, D.},
   author={Stoll, M.},
   title={Explicit $n$-descent on elliptic curves. I. Algebra},
   journal={J. Reine Angew. Math.},
   volume={615},
   date={2008},
   pages={121--155},
   issn={0075-4102},
   review={\MR{2384334 (2009g:11067)}},
   doi={10.1515/CRELLE.2008.012},
}

\bib{Faltings1983}{article}{
  author={Faltings, G.},
  title={Endlichkeitss\"atze f\"ur abelsche Variet\"aten \"uber Zahlk\"orpern},
  language={German},
  journal={Invent. Math.},
  volume={73},
  date={1983},
  number={3},
  pages={349--366},
  issn={0020-9910},
  review={\MR {718935 (85g:11026a)}},
  note={Erratum in: Invent.\ Math.\ {\bf 75} (1984), 381},
}

\bib{Faltings1994}{article}{
   author={Faltings, Gerd},
   title={The general case of S. Lang's conjecture},
   conference={
      title={Barsotti Symposium in Algebraic Geometry},
      address={Abano Terme},
      date={1991},
   },
   book={
      series={Perspect. Math.},
      volume={15},
      publisher={Academic Press},
      place={San Diego, CA},
   },
   date={1994},
   pages={175--182},
   review={\MR{1307396 (95m:11061)}},
}

\bib{HindrySilverman}{book}{
   author={Hindry, Marc},
   author={Silverman, Joseph H.},
   title={Diophantine geometry},
   series={Graduate Texts in Mathematics},
   volume={201},
   note={An introduction},
   publisher={Springer-Verlag, New York},
   date={2000},
   pages={xiv+558},
   isbn={0-387-98975-7},
   isbn={0-387-98981-1},
   review={\MR{1745599 (2001e:11058)}},
   doi={10.1007/978-1-4612-1210-2},
}

\bib{inoue}{article}{
   author={Inoue, Masahisa},
   title={Some new surfaces of general type},
   journal={Tokyo J. Math.},
   volume={17},
   date={1994},
   number={2},
   pages={295--319},
   issn={0387-3870},
   review={\MR{1305801 (95j:14048)}},
   doi={10.3836/tjm/1270127954},
}

\bib{kawamata}{article}{
   author={Kawamata, Yujiro},
   title={On Bloch's conjecture},
   journal={Invent. Math.},
   volume={57},
   date={1980},
   number={1},
   pages={97--100},
   issn={0020-9910},
   review={\MR{564186 (81j:32030)}},
   doi={10.1007/BF01389820},
}

\bib{langhyp}{article}{
   author={Lang, Serge},
   title={Hyperbolic and Diophantine analysis},
   journal={Bull. Amer. Math. Soc. (N.S.)},
   volume={14},
   date={1986},
   number={2},
   pages={159--205},
   issn={0273-0979},
   review={\MR{828820 (87h:32051)}},
   doi={10.1090/S0273-0979-1986-15426-1},
}

\bib{lu}{article}{
   author={Lu, Steven S. Y.},
   title={On surfaces of general type with maximal Albanese dimension},
   journal={J. Reine Angew. Math.},
   volume={641},
   date={2010},
   pages={163--175},
   issn={0075-4102},
   review={\MR{2643929 (2011h:14047)}},
   doi={10.1515/CRELLE.2010.032},
}

\bib{mp}{article}{
   author={Mendes Lopes, Margarida},
   author={Pardini, Rita},
   title={A connected component of the moduli space of surfaces with $p_g=0$},
   journal={Topology},
   volume={40},
   date={2001},
   number={5},
   pages={977--991},
   issn={0040-9383},
   review={\MR{1860538}},
   doi={10.1016/S0040-9383(00)00004-5},
}

\bib{miyaoka}{article}{
   author={Miyaoka, Yoichi},
   title={The orbibundle Miyaoka-Yau-Sakai inequality and an effective
   Bogomolov-McQuillan theorem},
   journal={Publ. Res. Inst. Math. Sci.},
   volume={44},
   date={2008},
   number={2},
   pages={403--417},
   issn={0034-5318},
   review={\MR{2426352 (2009g:14043)}},
   doi={10.2977/prims/1210167331},
}

\bib{serre}{book}{
   author={Serre, Jean-Pierre},
   title={Lectures on the Mordell-Weil theorem},
   series={Aspects of Mathematics},
   edition={3},
   note={Translated from the French and edited by Martin Brown from notes by
   Michel Waldschmidt;
   With a foreword by Brown and Serre},
   publisher={Friedr. Vieweg \& Sohn},
   place={Braunschweig},
   date={1997},
   pages={x+218},
   isbn={3-528-28968-6},
   review={\MR{1757192 (2000m:11049)}},
}

\bib{SkorobogatovBook}{book}{
   author={Skorobogatov, Alexei},
   title={Torsors and rational points},
   series={Cambridge Tracts in Mathematics},
   volume={144},
   publisher={Cambridge University Press, Cambridge},
   date={2001},
   pages={viii+187},
   isbn={0-521-80237-7},
   review={\MR{1845760 (2002d:14032)}},
   doi={10.1017/CBO9780511549588},
}

\bib{Stamminger}{thesis}{
   author={Stamminger, Sebastian},
   title={Explicit 8-descent on elliptic curves},
   date={2005},
   organization={International University Bremen},
   type={PhD thesis},
}

\bib{Stoll2Desc}{article}{
   author={Stoll, Michael},
   title={Implementing 2-descent for Jacobians of hyperelliptic curves},
   journal={Acta Arith.},
   volume={98},
   date={2001},
   number={3},
   pages={245--277},
   issn={0065-1036},
   review={\MR{1829626 (2002b:11089)}},
   doi={10.4064/aa98-3-4},
}
\bib{StollFiniteDescent}{article}{
   author={Stoll, Michael},
   title={Finite descent obstructions and rational points on curves},
   journal={Algebra Number Theory},
   volume={1},
   date={2007},
   number={4},
   pages={349--391},
   issn={1937-0652},
   review={\MR{2368954 (2008i:11086)}},
   doi={10.2140/ant.2007.1.349},
}

\bib{Voloch}{article}{
   author={Voloch, Jos\'e Felipe},
   title={Diagonal equations over function fields},
   journal={Bol. Soc. Brasil. Mat.},
   volume={16},
   date={1985},
   number={2},
   pages={29--39},
   issn={0100-3569},
   review={\MR{847114}},
   doi={10.1007/BF02584799},
}

\end{biblist}
\end{bibdiv}

\end{document}